\newcommand{\Z}{\mathbb{Z}}
\newcommand{\ux}{\underline{x}}
\newcommand{\uy}{\underline{y}}
\newcommand{\uz}{\underline{z}}
\newcommand{\mN}{\mathbb{N}}
\newcommand{\mR}{\mathbb{R}}
\newcommand{\mC}{\mathbb{C}}
\newcommand{\mV}{\mathbb{V}}
\newcommand{\mS}{\mathbb{S}}
\newcommand{\mcH}{\mathcal{H}}
\newcommand{\mcE}{\mathcal{E}}
\newcommand{\mcP}{\mathcal{P}}
\newcommand{\uv}{\underline{v}}
\newcommand{\uu}{\underline{u}}
\newcommand{\uw}{\underline{w}}
\newcommand{\sym}{\mathfrak{sp}}
\newcommand{\spl}{\mathfrak{sl}}
\newcommand{\so}{\mathfrak{so}}
\newcommand{\up}{\underline{\partial}}
\newcommand{\mE}{\mathbb{E}}
\newcommand{\U}{\operatorname{\mathsf{U}}} 
\newcommand{\R}{\mathbb{R}}
\newcommand{\Spin}{\operatorname{\mathsf{Spin}}}     
\newcommand{\C}{\mathbb{C}}
 \newtheoremstyle{mystyle}
{}
{}
{\normalfont}
{0pt}
{\bfseries}
{.}
{4pt}
{}
\theoremstyle{mystyle}
\newtheorem{definition}{Definition}[section]
\newtheorem{notation}[definition]{Notation}
\newtheorem{remark}[definition]{Remark}
\newtheoremstyle{mystyle2}
{6pt}
{6pt}
{\itshape}
{0pt}
{\bfseries}
{.}
{4pt}
{}
\theoremstyle{mystyle2}
\newtheorem{theorem}[definition]{Theorem}
\newtheorem{prop}[definition]{Proposition}
\newtheorem{lemma}[definition]{Lemma}
\newtheorem{corollary}[definition]{Corollary}
\numberwithin{equation}{section}
\begin{document}

\title[The orthogonal branching problem for symplectic monogenics]{The orthogonal branching problem\\ for symplectic monogenics}


\author{David Eelbode}
\address{Campus Middelheim 
	\\Middelheimlaan 1
	\\M.G.312 
	\\2020 Antwerpen 
\\	Belgi\"e}
\curraddr{}
\email{david.eelbode@uantwerpen.be}
\thanks{}

\author{Guner Muarem}
\address{Campus Middelheim 
	\\Middelheimlaan 1
	\\M.G.221 
	\\2020 Antwerpen 
	\\	Belgi\"e}
\curraddr{}
\email{guner.muarem@uantwerpen.be}
\thanks{}
\subjclass[2010]{15A66, 53D05, 17B10}

\keywords{Symplectic Clifford analysis, Dirac operator, branching, transvector algebras, representation theory}

\date{October 2021}

\dedicatory{}

\begin{abstract}
In this paper we study the $\mathfrak{sp}(2m)$-invariant Dirac operator $D_s$ which acts on symplectic spinors, from an orthogonal point of view. By this we mean that we will focus on the subalgebra $\mathfrak{so}(m) \subset \mathfrak{sp}(2m)$, as this will allow us to derive branching rules for the space of $1$-homogeneous polynomial solutions for the operator $D_s$ (hence generalising the classical Fischer decomposition in harmonic analysis for a vector variable in $\mathbb{R}^m$). To arrive at this result we use techniques from representation theory, including the transvector algebra $\mathcal{Z}(\mathfrak{sp}(4),\mathfrak{so}(4))$ and tensor products of Verma modules.
\end{abstract}

\maketitle
 \bibliographystyle{math} 
\bibliography{untitled}
\setcounter{tocdepth}{1}
\tableofcontents
\section{Introduction}
\noindent
An important tool in understanding the representation theory for a Lie algebra $\mathfrak{g}$ is the study of restrictions of an irreducible representation $\rho$ for $\mathfrak{g}$ to a Lie subalgebra $\mathfrak{g}' \subset \mathfrak{g}$. This so-called restricted representation will in general no longer be irreducible as it decomposes into a direct sum of irreducible representations for $\mathfrak{g}'$. The algebraic tool which allows us to describe which $\mathfrak{g}'$-irreducible summands show up and what their multiplicities are is called a branching rule; see e.g. Goodman's and Wallach's book \cite{Goodman} for classical branching rules. These abstract rules get a physical meaning in the theory of symmetry breaking, see for instance \cite{Frahm} and the references therein for more details on symmetry breaking and its applications in physics (e.g. splitting of degenerate energy levels into multiplets in quantum mechanics). Moreover, the way in which $\mathfrak{g}'$-irreducible representations appear in $\mathfrak{g}$ often gives rise to special functions (the Gegenbauer polynomials appearing in harmonic analysis are an important example). \\
\noindent
The classical Fischer decomposition in harmonic analysis can be seen as the decomposition of the $\mathfrak{sp}(2m)$-irreducible Segal-Shale-Weil oscillator representation into irreducible representations for the algebra $\mathfrak{so}(m)$. \newpage \noindent In this paper we will generalise this result, focusing on the branching problem for other infinite-dimensional irreducible representations for the symplectic Lie algebra $\mathfrak{sp}(2m)$, when restricting to the (orthogonal) subalgebra $\mathfrak{so}(m)$. To do so, we will rely on the fact that the spaces of homogeneous solutions for the symplectic Dirac operator $D_s$ (known as symplectic monogenics) provide models for irreducible $\mathfrak{sp}(2m)$-representations of infinite dimension (these will be denoted by means of $\mS^\infty_k$ with $k \in \mN$).\\
The Dirac operator appears in many branches in mathematics, most notably in differential geometry and representation theory, but the study of its null solutions has always been an important objective in Clifford analysis. Traditionally, this operator is defined as the $\mathsf{Spin}(m)$-invariant first order operator factorising the Laplace operator, see for instance \cite{BDS, DSS, GM}. However, in recent years many variants of the classical theory were developed, in particular a symplectic version of the Dirac operator. This operator was introduced in \cite{Habermann} for general symplectic manifolds $(M,\omega)$ admitting a metaplectic structure (this is the symplectic analogue of the spin structure). Whereas the classical Dirac operator acts on differentiable functions with values in a {\em finite-dimensional} spinor space, often denoted by $\mathbb{S}$ and containing complex Weyl or Dirac spinors depending on the parity of the underlying dimension, the symplectic Dirac operator acts on differentiable functions with values in an {\em infinite-dimensional} symplectic spinor space $\mathbb{S}^{\infty}_0$ (the meaning of the subscript zero will become clear in what follows). These symplectic spinors were introduced by Kostant in \cite{Kos} and studied by Reuter \cite{Reuter} from a more physical point of view. In the work of Habermann, these symplectic spinors were modelled as elements of the representation space induced by the unitary representation $\mathfrak{m}:\mathsf{Mp}(m)\to \U(L^2(\R^m))$. The smooth vectors of $\mathfrak{m}$ then exactly correspond to the elements of the Schwartz space $\mathcal{S}(\R^m)$ of rapidly decreasing functions on ${\R}^{m}$. By using the multi-index notation, one introduces the following norm (for $\alpha ,\beta \in \mathbb {N} ^{m}$):  
$$||f||_{\alpha ,\beta }:=\sup _{\underline{q}\in \mathbb {R} ^{m}}\left|\underline{q}^{\alpha }(D^{\beta }f)(\underline{q})\right| .$$
The Schwartz space is then explicitly given by
$$
\mathcal{S}\left(\mathbb{R}^{m},\mathbb{C} \right):=\left\{f\in C^{\infty }(\mathbb {R}^{m},\mathbb {C} ) : \|f\|_{\alpha ,\beta }<\infty\  \text{for all } \alpha ,\beta \in \mathbb {N} ^{m} \right\}.$$
Using this realisation for $\mathbb{S}^{\infty}$, Habermann defined the symplectic Dirac operator on the canonical symplectic space $(\mathbb{R}^{2m},\omega_0)$, equipped with coordinates $(x_j,y_j)_{j=1}^m \in \R^{2m}$ and the standard symplectic form $\omega_0=\sum_{j=1}^m dx_j\wedge dy_j$, as the first order differential operator 
\[ D_s = i\langle \underline{q},\up_y\rangle - \langle \up_x,\up_q\rangle\ . \]
This operator was then studied more extensively in the work of De Bie et.\ al, see {e.g. \cite{BHS}}. In this paper we continue the study of the symplectic Dirac operator $D_s$, but we will switch from the aforementioned Schwartz model for symplectic spinors to a polynomial model. These latter is also known as the Fock model, while the former is often referred to as the Schr\"odinger model in the context of representation theory, see for instance \cite{berg}.
The {reason for switching to the Fock model} lies in the fact that we can then use techniques for decomposing spaces of polynomials in a matrix variable (under the action of the orthogonal group) to study symplectic monogenics in an encompassing framework,
{although we will soon see that not all vector variables will play the same role.} In particular, we define the \textit{symplectic Dirac operator} as an operator in the Weyl algebra $\mathcal{W}(\R^{3m})$ in three vector variables in $\mR^m$, by means of 
\[{D}_s:=\langle \underline{z},\underline{\partial}_y\rangle - \langle \underline{\partial}_x,\underline{\partial}_z \rangle. \]
We are thus working with three vector variables in $\mR^m$, i.e. a matrix-variable $(\ux,\uy;\uz)$ in $\mathbb{R}^{m\times 3}$ where we use the semicolon to stress the fact that $\uz$ plays another role than $(\ux,\uy)$. Indeed, whereas the latter denote the variables on which our functions (often polynomials) depend, the former will be used to the describe the symplectic spinors. These can indeed be modelled by polynomials in a variable $\uz \in \mR^m$, see for instance the work of Krýsl \cite{Krysl}.\\
Note that our setup has implications for the dimensional parameter $m \in \mN$: in order to be sure that tensor products of $\so(m)$-representations behave `nicely', one has to work in the so-called \textit{stable range}; see the paper of Howe et al.\ \cite{HT} for more background. This means that twice the number of variables ($3$ in our case) has to be less than the dimension $m$. Put differently, throughout this paper we work with $m \geq 6$. It is of course a relevant question to study what happens outside this stable rangle, see for instance the recent paper \cite{Lav} for the implications on the classical Howe duality in harmonic analysis. The object of study in this paper will then be the following spaces: 
\begin{align*}
\mathbb{S}_k^{\infty}:=\ker_k(D_s) = \ker(D_s)\cap\left( \mathcal{P}_k(\mathbb{R}^{2m},\mathbb{C})\otimes\mathcal{P}(\R^m,\C)\right),
\end{align*}
where $\mathcal{P}(\R^m,\C)$ is the notation we use for $\mathbb{C}$-valued polynomials (with a subscrip $k$ to denote the subspace containing $k$-homogeneous polynomials). The infinite-dimensional symplectic module $\mathbb{S}_k^{\infty}$ is irreducible, see for instance \cite{BHS}. The branching problem we will consider is thus denoted as follows: which highest $\so(m)$-weights $\lambda$ appear in the direct sum decomposition 
\begin{align}\label{branchings}
\mathbb{S}^{\infty}_k\bigg\downarrow^{\mathfrak{sp}(2m)}_{\mathfrak{so}(m)} = \bigoplus_{\lambda}\mu_\lambda\mV_\lambda,
\end{align}
with their multiplicities $\mu_\lambda$. Right from the onset, we should note that this is in a sense ill-defined: $\mu_\lambda$ will {\em not} be a number, but an infinity. It will thus be crucial to keep track of these infinities using a dual symmetry algebra, this is where Verma modules will appear. Not only will we determine which $\mathfrak{so}(m)$-irreducible representations will appear in the symplectic module $\mathbb{S}_k^{\infty}$, we will also investigate how these modules are embedded (whereby the case $k=1$ will be our guiding example). These embeddings are provided by a set of rotationally invariant operators which appeared in the literature as the generators of the \textit{transvector algebra} $\mathcal{Z}(\mathfrak{sp}(4),\so(4))$. This quadratic algebra was used by De Bie et al. in \cite{BER} as a new dual partner for the orthogonal group, hence obtaining formulas for the integral over a Stiefel manifold. See also the paper of Zhelobenko \cite{Zhelobenko} for a general introduction to the theory of transvector algebras, and section 4 for the necessary background in this paper.
Finally, we mention that the `opposite question' has been treated before: one can start from orthogonal spinors (irreducible for the orthogonal Lie algebra in a suitable dimension) and decompose this space into irreducible representations for the symplectic Lie algebra. As a matter of fact, this question gave rise to so-called quaternionic hermitian Clifford analysis and the study of $\mathfrak{osp}(4|2)$-monogenics, see for instance \cite{BSE}. From this point of view, the present paper is the inverse problem. It is more subtle though, because of the ininities mentioned above. \\
Let us then briefly sketch the outline of this paper: in Section 2, the classical Fischer decomposition will be recalled, as this allows to decompose the space of polynomials into harmonic polynomials (this then serves as our guiding example of an infinite-dimensional symplectic branching problem). In Section 3 we provide more information on symplectic Clifford analysis and introduce the symplectic Dirac operator in our polynomial model for the symplectic spinors. In Section 4, we derive our main tool to solve the branching problem: a new Fischer decomposition for the space $\mathcal{P}(\R^{3m},\mathbb{C})$. \newpage \noindent Next, in Section 5, we use this Fischer decomposition, the symplectic Howe duality and the transvector algebra to arrive at our desired branching rule. In the last section we will look at this branching problem from the dual point of view: although this does not lead to the full picture, it does at least convey some useful insight. 
\begin{notation}
In what follows we will need various realisations of the Lie algebra $\mathfrak{sl}(2)$. In order to fix notations for the rest of the paper, we recall that in matrix language this Lie algebra is defined by the following traceless $(2\times 2)$-matrices:
\begin{align*}
X = \begin{pmatrix}0&1\\0&0\end{pmatrix},\quad
Y = \begin{pmatrix}0&0\\1&0\end{pmatrix}\quad\text{and}\quad
H = \begin{pmatrix}1&0\\0&-1\end{pmatrix}.
\end{align*}
These basis elements satisfy the relations $[X,Y]=H$, $[H,X] = 2X$ and $[H,Y]=-2Y$. We note that all the Lie algebras encountered in this paper are assumed to be complex. The notation $\operatorname{Alg}(A_1,\dots,A_n)$ will always stand for the Lie algebra generated by the operators $A_1,\dots,A_n$ under the commutator bracket $[A_k,A_{\ell}]:=A_kA_{\ell}-A_{\ell}A_k$. We adopt the notation $\langle \uv,\uw\rangle:=\sum_{j=1}^m u_jv_j$ for the usual inner product on $\mathbb{R}^m$, and whenever $\mN$ appears we mean all the positive integers including zero.
\end{notation}
\subsection{Acknowledgments} 
The second author was supported by the {FWO-EoS project} \texttt{G0H4518N}.
\section{The classical Fischer decomposition}
\noindent
Consider the space $\mathcal{P}(\R^m,\C)$ of complex valued polynomials defined on $\R^m$. There is a canonical action of the group $\mathsf{SO}(m)$ on this space. For $g\in\mathsf{SO}(m)$ and $P(\uz)\in\mathcal{P}(\R^m,\C)$ we define $g\cdot P(\uz):= P(g^{-1} \uz)$ as the so-called \textit{regular action}.
\begin{definition}
Let $\Delta_z=\sum_{j=1}^m\partial_{z_j}^2$ be the Laplace operator on $\R^m$. We then say that a function $f(\uz)$ on $\R^m$ is \textit{harmonic} if $f(\uz) \in \ker(\Delta_z)$. We denote the space of \textit{k-homogeneous harmonics} by means of $\mathcal{H}_k(\R^m,\mathbb{C}):=\mathcal{P}_k(\R^m,\mathbb{C})\cap \ker(\Delta_z)$. We hereby stress that the variable $z_j \in \mR$ is real, it should not be read as a complex variable here (it corresponds to the $j$-th coordinate for the vector variable $\uz \in \mR^m$). 
\end{definition}
\begin{notation}
It is well-known that the space $\mathcal{H}_k(\R^m,\mathbb{C})$ provides a polynomial model for the irreducible representation for SO$(m)$, or its Lie algebra $\so(m)$, with highest weight $(k,0,\ldots,0)$. In this paper we will use the short-hand notation $(k)_z$ for this highest weight, so trailing zeroes will often be omitted for notational ease and a subscript will be added to make clear that this space is realised in terms of polynomials depending on $\uz \in \mR^m$ (or any other vector variable, when needed).
\end{notation}
\noindent
We can now formulate the {Fischer decomposition} of the polynomial space $\mathcal{P}(\R^m,\C)$ into the space of harmonic homogeneous polynomials $\mathcal{H}_k(\R^m,\C)$ of degree $k$ by means of 
\[ \mathcal{P}(\R^m,\C) = \bigoplus_{k=0}^{\infty} \bigoplus_{p=0}^{\infty}  |\uz|^{2p} \mathcal{H}_k(\R^m,\C). \]
All subspaces $|\uz|^{2p}\mathcal{H}_k(\R^m,\mathbb{C})$ for fixed $k$ are isomorphic as $\mathsf{SO}(m)$-modules (since $|\uz|^2$ is an invariant). This implies that every irreducible representation $\mathcal{H}_k(\R^m,\mathbb{C})$ appears infinitely many times in this decomposition. In other words, the decomposition from above is not multiplicity-free. However, by using the `hidden' $\mathfrak{sl}_h(2)$-symmetry of the polynomial space $\mathcal{P}(\R^m,\C)$, where $\mathfrak{sl}_h(2)$ is realised as
\[ \spl_h(2) = \textup{Alg}\bigg(\frac{1}{2}|\uz|^2, -\frac{1}{2}\Delta_z, \mE_z + \frac{m}{2}\bigg), \]  
we will be able to gather the infinitely many copies of $\mathcal{H}_k$ into one irreducible representation for the Lie algebra $\mathfrak{sl}_h(2)$. Consequentially, the decomposition becomes multiplicity-free under the joint action of the \textit{Howe dual pair} $(\mathsf{SO}(m),\mathfrak{sl}(2))$, see \cite{Goodman} for more details on dual pairs. The `gathering' mentioned above is done by introducing infinite-dimensional Verma modules for the Lie algebra $\mathfrak{sl}(2)$. As these will play a key role in this paper, we briefly recall their construction. 
\begin{definition}
	Let $\mathfrak{g}$ be a complex simple Lie algebra with Borel subalgebra $\mathfrak{b}$, Cartan subalgebra $\mathfrak{h}$, $\lambda\in\mathfrak{h}^*$ and denote by $\C_{\lambda}$ an irreducible representation of $\mathfrak{b}$. We now define the infinite-dimensional \textit{Verma module} with lowest weight $\lambda$ as the $\mathcal{U}(\mathfrak{g})$-module induced by $\C_{\lambda}$. In other words, we have
	\[ \mathbb{V}^{\infty}_{\lambda}:= \mathcal{U}(\mathfrak{g})\otimes_{\mathcal{U}(\mathfrak{b})} \C_{\lambda}.\]
\end{definition}
\noindent
In our case, $\mathfrak{g}=\mathfrak{sl}(2)$ and we obtain the following more explicit definition. Let $v$ be a \textit{lowest weight} vector, i.e. $Hv = \lambda v$ and $Yv= 0$ (where e.g. $Hv := H[v]$ denotes the action of $H$ on $v$).  
This means that the action of $H$ on $v$ is diagonal and $v$ is annihilated by $Y$. The Verma module $\mathbb{V}^{\infty}_{\lambda}$ is then constructed as the infinite dimensional module generated by the lowest weight vector $v$. Letting the operator $X$ act on $v$ we thus get
\[ \mathbb{V}_\lambda^\infty := \textup{span}_\mathbb{C}\big\{X^k v : k \in \mathbb{N}\big\}\ . \]
One easily verifies that the action of the $H$ and $Y$ on a basis vector is given by: 
\begin{align*}
	{H(X^nv)} &= (\lambda + 2n)(X^nv)\\
	{Y(X^nv)} &=-n({\lambda}+n-1)(X^{n-1}v)\ . 
\end{align*}
Using this notation for Verma modules, we can state the following (where we have switched from the orthogonal group to the corresponding Lie algebra): 
\begin{theorem}[Multiplicity-free Fischer decomposition, \cite{Goodman}]
	Under the joint action of  $\so(m) \times \mathfrak{sl}(2)$, we obtain the following \textit{multiplicity-free} decomposition of the polynomial space $\mathcal{P}(\R^m,\C)$: 
	\[ \mathcal{P}(\R^{m},\C) \cong \bigoplus_{k=0}^{\infty}\mathcal{H}_k(\R^m,\C)\otimes \mathbb{V}_{k+\frac{m}{2}}^{\infty},\]
	where $\mathbb{V}_{k+\frac{m}{2}}^{\infty}$ is the infinite-dimensional Verma module defined above, for the algebra $\mathfrak{sl}_h(2)$.
\end{theorem}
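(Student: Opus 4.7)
The plan is to refine the classical Fischer decomposition
\[
\mathcal{P}(\R^m,\C) \;=\; \bigoplus_{k=0}^{\infty}\bigoplus_{p=0}^{\infty}\,|\uz|^{2p}\,\mathcal{H}_k(\R^m,\C)
\]
by repackaging the infinite ``column'' $\bigoplus_{p\ge 0}|\uz|^{2p}\mathcal{H}_k$ as a tensor product $\mathcal{H}_k\otimes\mathbb{V}^\infty_{k+m/2}$. The first step is to realise the $\mathfrak{sl}_h(2)$-triple explicitly. Setting $X=\tfrac12|\uz|^2$, $Y=-\tfrac12\Delta_z$ and $H=\mathbb{E}_z+\tfrac{m}{2}$, a direct computation using $[\mathbb{E}_z,|\uz|^2]=2|\uz|^2$, $[\mathbb{E}_z,\Delta_z]=-2\Delta_z$ and $[\Delta_z,|\uz|^2]=4\mathbb{E}_z+2m$ yields the expected relations $[X,Y]=H$, $[H,X]=2X$, $[H,Y]=-2Y$. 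This shows that $\mathfrak{sl}_h(2)$ really does act on $\mathcal{P}(\R^m,\C)$; because $|\uz|^2$, $\Delta_z$ and $\mathbb{E}_z$ are all $\mathsf{SO}(m)$-invariant, this action moreover commutes with the regular representation of $\so(m)$.

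Next I would fix an arbitrary $H_k\in\mathcal{H}_k(\R^m,\C)$ and analyse the cyclic $\mathfrak{sl}_h(2)$-submodule it generates. By definition of harmonicity, $Y\cdot H_k=-\tfrac12\Delta_z H_k=0$, and by homogeneity $H\cdot H_k=(k+\tfrac m2)H_k$. Hence $H_k$ is a lowest weight vector of weight $\lambda_k:=k+\tfrac m2$. By the universal property of Verma modules, there is a surjection
\[
\mathbb{V}^\infty_{\lambda_k}\;\twoheadrightarrow\;\operatorname{span}_\C\bigl\{X^pH_k:p\in\mN\bigr\}\;=\;\bigoplus_{p\ge 0}|\uz|^{2p}\cdot\C H_k,
\]
sending the canonical generator to $H_k$. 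Since the vectors $|\uz|^{2p}H_k$ are homogeneous of distinct degrees $k+2p$, they are linearly independent, so the surjection is an isomorphism. Equivalently, one may verify the Verma relation $Y(X^nH_k)=-n(\lambda_k+n-1)X^{n-1}H_k$ by induction, which simultaneously shows that the module is irreducible: the coefficient $-n(\lambda_k+n-1)$ never vanishes for $n\ge 1$ because $\lambda_k=k+\tfrac{m}{2}>0$ (recall $m\ge 6$), so no non-trivial singular vector can appear.

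Finally I would assemble the pieces. Since the $\so(m)$-action and the $\mathfrak{sl}_h(2)$-action commute, the $\so(m)\times\mathfrak{sl}_h(2)$-module generated by $\mathcal{H}_k\subset\mathcal{P}_k(\R^m,\C)$ is isomorphic to $\mathcal{H}_k\otimes\mathbb{V}^\infty_{\lambda_k}$: concretely the map $H_k\otimes X^pv\mapsto(\tfrac12)^p|\uz|^{2p}H_k$ is an $\so(m)\times\mathfrak{sl}_h(2)$-equivariant bijection onto $\bigoplus_{p\ge 0}|\uz|^{2p}\mathcal{H}_k$. Summing over $k\ge 0$ and comparing with the classical Fischer decomposition yields the desired isomorphism. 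Multiplicity-freeness is then automatic: the representations $\mathcal{H}_k$ are pairwise non-isomorphic for $\so(m)$ (different highest weights $(k)_z$), and the Verma modules $\mathbb{V}^\infty_{\lambda_k}$ are pairwise non-isomorphic for $\mathfrak{sl}_h(2)$ (different lowest weights), so the summands are pairwise non-isomorphic as $\so(m)\times\mathfrak{sl}_h(2)$-modules.

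The only step that requires genuine care is the identification with an \emph{irreducible} Verma module: one must check that no relation among the vectors $X^pH_k$ sneaks in, i.e., that the lowest weight $\lambda_k=k+\tfrac m2$ does not lie in the bad locus $\{0,-1,-2,\ldots\}$ where a singular vector would appear. This follows from the positivity $\lambda_k>0$ for all $k\in\mN$. Everything else is essentially a direct unpacking of the classical Fischer decomposition through the glasses of Howe duality.
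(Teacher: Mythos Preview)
Your argument is correct and follows the standard route: verify the $\mathfrak{sl}_h(2)$-relations, identify each harmonic $H_k$ as a lowest weight vector of weight $k+\tfrac{m}{2}$, check that the cyclic module it generates is the full (irreducible) Verma module via the non-vanishing of $-n(\lambda_k+n-1)$, and then sum over $k$ using the classical Fischer decomposition. Note, however, that the paper does not supply its own proof of this theorem at all: it is quoted as a known result from \cite{Goodman} and used as background. So there is nothing to compare against beyond observing that your write-up is a clean self-contained justification of the cited fact.
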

\begin{corollary}\label{mpfree}The multiplicity-free decomposition from above allows us to conclude:
\begin{enumerate}[\normalfont(i)]
\item  $\mathcal{H}_k(\R^m,\C)\otimes \mathbb{V}_{k+\frac{m}{2}}^{\infty}$ is an irreducible $\so(m) \times \mathfrak{sl}(2)$-module.
\item As an $\mathfrak{sl}(2)$-module, this tensor product contains $\dim(\mathcal{H}_k)$ copies of $\mathbb{V}_{k+\frac{m}{2}}^{\infty}$.
\item As an $\mathfrak{so}(m)$-module, it contains infinitely many copies of $\mathcal{H}_k$ (as the Verma module has infinite dimension). However, this Verma module allows us to keep track of this infinity. 
\end{enumerate}
\end{corollary}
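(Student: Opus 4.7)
My plan is to handle the three claims together, since they really form one statement about tensor products of irreducibles under a commuting pair of actions, after first isolating the one nontrivial input: the irreducibility of the $\mathfrak{sl}(2)$-Verma module $\mathbb{V}_{k+\frac{m}{2}}^{\infty}$. From the explicit action $Y(X^nv)=-n(\lambda+n-1)X^{n-1}v$ recorded just before the corollary, with $\lambda=k+\tfrac{m}{2}>0$ (we are in the stable range $m\geq 6$), the coefficient never vanishes for $n\geq 1$. Hence any nonzero vector in $\mathbb{V}_{k+\frac{m}{2}}^{\infty}$ can be brought to a nonzero multiple of the lowest weight vector $v$ by iterating $Y$, and $v$ in turn generates the whole module under $X$, so the Verma module is irreducible.

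For (i), the idea is to use weight spaces. The key observation is that each $H$-weight space of $\mathbb{V}_{k+\frac{m}{2}}^{\infty}$ is one-dimensional, since the basis vectors $X^nv$ carry pairwise distinct weights $k+\tfrac{m}{2}+2n$. Taking a Cartan of $\mathfrak{so}(m)\times\mathfrak{sl}_h(2)$ obtained by adjoining $H$ to a Cartan of $\mathfrak{so}(m)$, every joint weight space of $\mathcal{H}_k\otimes \mathbb{V}_{k+\frac{m}{2}}^{\infty}$ is then a pure tensor of the form $h\otimes X^nv$ with $h$ a weight vector of $\mathcal{H}_k$. Any nonzero $\mathfrak{so}(m)\times\mathfrak{sl}(2)$-submodule $M$ must contain at least one such pure tensor (decompose into joint weight spaces). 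Acting with $\mathfrak{so}(m)$ and invoking irreducibility of $\mathcal{H}_k$ puts all of $\mathcal{H}_k\otimes \mathbb{C}X^nv$ into $M$; then acting with $\mathfrak{sl}(2)$ and invoking the irreducibility of the Verma module established above produces the whole tensor product.

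Claims (ii) and (iii) are then formal consequences of the way the product algebra acts. Viewed only as an $\mathfrak{sl}(2)$-module, the first tensorand is a multiplicity space acted on trivially, so choosing any basis of $\mathcal{H}_k$ exhibits the tensor product as a direct sum of $\dim(\mathcal{H}_k)$ copies of $\mathbb{V}_{k+\frac{m}{2}}^{\infty}$, giving (ii). Symmetrically, viewed as an $\mathfrak{so}(m)$-module the Verma module becomes the multiplicity space, so one gets infinitely many copies of $\mathcal{H}_k$, which is (iii); the Verma module is precisely the algebraic device that organises this infinity, which is the point the authors wish to emphasise for the symplectic generalisation later.

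The main obstacle, such as it is, is making sure the weight-isolation step survives the fact that one tensorand is infinite-dimensional. It does, because the weight multiplicities of a Verma module for $\mathfrak{sl}(2)$ are all equal to one; had they been larger, a joint weight vector could be a genuine sum of pure tensors and extra work (typically a Jacobson density type argument) would be needed before irreducibility of the factors can be used. Here the one-dimensionality of the weight spaces removes that difficulty and makes the argument essentially identical to the familiar finite-dimensional statement that $V\boxtimes W$ is irreducible when $V$ and $W$ are.
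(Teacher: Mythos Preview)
Your argument is correct. In fact you have supplied considerably more than the paper does: the paper states this corollary without proof, treating all three items as immediate consequences of the multiplicity-free Fischer decomposition just above (the phrase ``allows us to conclude'' is the entire justification offered). Your proof is therefore not so much a different route as an actual verification of claims the authors leave to the reader. The one substantive ingredient you make explicit --- irreducibility of the Verma module $\mathbb{V}^{\infty}_{k+\frac{m}{2}}$ via the nonvanishing of the $Y$-action coefficients for $\lambda=k+\tfrac{m}{2}>0$ --- is indeed required for (i) and is not spelled out in the paper; your weight-space isolation argument exploiting the one-dimensionality of each $H$-eigenspace in the Verma module is a clean way to reduce to the irreducibility of the two tensorands separately.
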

\noindent
This Fischer decomposition can also be restated in such a way that the connection with the $\sym(2m)$-irreducible representations $\mS^\infty_{0\pm}$ becomes clear. Note that we added a parity sign $(\pm)$ to the symplectic spinors in the theorem below, as both in the Schwartz model (see the introduction) or the polynomial model one should make a distinction between even and odd spinors. However, as the impact of this parity sign on the analysis is minimal (it only means that when considering polynomials in $\uz \in \mR^m$, the degree has to be even or odd) we will ignore the parity sign throughout the paper.
\begin{theorem}\label{s0infty}
The branching problem $\mathbb{S}^{\infty}_0\big \downarrow^{\mathfrak{sp}(2m)}_{\mathfrak{so}(m)}$ corresponds to the harmonic Fischer decomposition of the space $\mathcal{P}(\R^m,\mathbb{C})$.
\end{theorem}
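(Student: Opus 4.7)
The plan is to unpack the definition of $\mathbb{S}^\infty_0$ in the polynomial (Fock) model and then recognize that what remains is exactly the situation governed by the classical Fischer decomposition recalled in Section 2.

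First I would compute $\mathbb{S}^\infty_0$ explicitly. Since
\[
\mathbb{S}^\infty_0 \;=\; \ker(D_s)\cap\bigl(\mathcal{P}_0(\R^{2m},\C)\otimes\mathcal{P}(\R^m,\C)\bigr)
\]
and $\mathcal{P}_0(\R^{2m},\C)=\C$, elements of $\mathbb{S}^\infty_0$ are polynomials depending only on the spinor variable $\uz\in\R^m$. For such an $f(\uz)$ one has $\up_x f=\up_y f=0$, so
\[
D_s f \;=\; \langle\uz,\up_y\rangle f - \langle\up_x,\up_z\rangle f \;=\; 0
\]
automatically. Consequently $\mathbb{S}^\infty_0 \;=\; \mathcal{P}(\R^m,\C)$ as a vector space.

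Next I would identify the $\mathfrak{sp}(2m)$-structure inherited by $\mathbb{S}^\infty_0$. In the Fock realisation of the symplectic spinors, the $\mathfrak{sp}(2m)$-action on $\mathcal{P}(\R^m,\C)$ is (a parity component of) the Segal--Shale--Weil oscillator representation, generated by the degree-two symmetric combinations of the Weyl-algebra operators $z_j$ and $\partial_{z_j}$. Restricting to the orthogonal subalgebra $\mathfrak{so}(m)\subset\mathfrak{sp}(2m)$ gives precisely the standard $\mathfrak{so}(m)$-action on $\mathcal{P}(\R^m,\C)$ obtained by differentiating the regular $\mathsf{SO}(m)$-action in the variable $\uz$. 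Under this subalgebra, the operators $\tfrac12|\uz|^2$, $-\tfrac12\Delta_z$ and $\mE_z+\tfrac{m}{2}$ are $\mathfrak{so}(m)$-invariant and generate precisely the hidden $\mathfrak{sl}_h(2)$ appearing in Corollary \ref{mpfree}; hence the commuting pair acting on $\mathbb{S}^\infty_0$ is exactly the Howe dual pair $(\mathfrak{so}(m),\mathfrak{sl}_h(2))$ from Section 2.

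Finally, invoking the multiplicity-free Fischer decomposition recalled in Section 2 yields
\[
\mathbb{S}^\infty_0 \;=\; \mathcal{P}(\R^m,\C) \;\cong\; \bigoplus_{k=0}^{\infty}\mathcal{H}_k(\R^m,\C)\otimes\mathbb{V}^{\infty}_{k+\tfrac{m}{2}},
\]
where the left-hand side carries the $\mathfrak{sp}(2m)$-module structure and the right-hand side is indexed by the $\mathfrak{so}(m)$-highest weights $(k)_z$ with multiplicity encoded by an $\mathfrak{sl}_h(2)$-Verma module. This is exactly the branching $\mathbb{S}^\infty_0\downarrow^{\mathfrak{sp}(2m)}_{\mathfrak{so}(m)}$.

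The only genuinely substantive step is the identification of the $\mathfrak{sp}(2m)$-action on $\mathbb{S}^\infty_0$ with the oscillator representation on $\mathcal{P}(\R^m,\C)$, and of the embedded $\mathfrak{so}(m)$ with the one acting by rotations in $\uz$; this is essentially the defining feature of the polynomial model and is verified by a direct check on the generators. Once that is in place, the theorem is merely a restatement of the classical Fischer decomposition.
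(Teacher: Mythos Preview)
Your argument is correct and follows the same route as the paper: identify $\mathbb{S}^\infty_0$ with $\mathcal{P}(\R^m,\C)$ (the $D_s$-condition being vacuous at degree $k=0$) and then invoke the classical harmonic Fischer decomposition with its Howe dual pair $(\mathfrak{so}(m),\mathfrak{sl}_h(2))$. The only cosmetic difference is that the paper makes the parity split $\mathbb{S}^\infty_0=\mathbb{S}^\infty_{0^+}\oplus\mathbb{S}^\infty_{0^-}$ (even versus odd degree in $\uz$) explicit before combining the two halves, whereas you allude to it in passing.
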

\begin{proof}
		We note that for $k=0$ we have $\mathbb{S}_0^{\infty}=\mathcal{P}(\R^m,\mathbb{C}).$ This space can be written as a direct sum of the spaces of even and odd polynomials. This naturally leads to the orthogonal branching for the space of symplectic spinors (the parity of the polynomials dictates the parity of the spinors). In terms of $\spl(2)$-Verma modules, one has that 
\begin{align*}
\mS^\infty_{0^+}  \cong  \bigoplus_{k = 0}^\infty \mcH_{2k} \otimes \mV^\infty_{2k + \frac{m}{2}}\quad\text{and}\quad 
\mS^\infty_{0^-} \cong  \bigoplus_{k = 0}^\infty \mcH_{2k+1} \otimes \mV^\infty_{2k + 1 + \frac{m}{2}}\ .
\end{align*}
All together this means that
\[ \mathbb{S}_0^{\infty}=\mathbb{S}_{0^+}^{\infty}\oplus \mathbb{S}_{0^-}^{\infty}\cong\bigoplus_{k=0}^{\infty} \mathcal{H}_{k}(\mR^m,\mC) \otimes \mathbb{V}^{\infty}_{\frac{m}{2}+k}\ ,\] 
so that we can see the $\mathfrak{so}(m)$-irreducible representations of the $\mathfrak{sp}(2m)$-module $\mathbb{S}^{\infty}_0$.
\end{proof}
\noindent
In the remainder of this paper we will first derive a similar result for $k=1$. We will give an explicit argument based on a novel Fischer decomposition of the space $\mathcal{P}_1(\R^{2m},\mathbb{S}_0^{\infty})$. Once the decomposition for $\mathbb{S}_1^{\infty}$ as a representation for $\so(m)$ is known, this can then be used inductively to solve the branching problem for $\mathbb{S}_k^{\infty}$ for general $k$ (this will then be explained at the end of the paper). However, to do so we will also need more general irreducibele representations for $\so(m)$ (than just harmonics). These $\so(m)$-modules have a highest weight of the form $(\lambda_1,\lambda_2,\ldots,\lambda_p,0,\ldots,0)$ where the integers satisfy the dominant weight condition $\lambda_1\geq \lambda_2\geq \dots \geq \lambda_p \geq 0$, and can be realised using harmonic polynomials in several vector variables. For the full details we refer to \cite{LPC}, we shortly review the basics which we will need here (note that also for these highest weights trailing zeroes will often be omitted). In the following, let $N$ be an integer for which $1\leq N\leq \lfloor m/2\rfloor$. \begin{definition}\label{simplicial}
	A function $f:\mathbb{R}^{N\times m}\to\mathbb{C}$ given by $(\uu_1,\dots,\uu_N) \mapsto f(\uu_1,\dots,\uu_N)$ is called \textit{simplicial harmonic} if the following holds:
	\begin{align*}
	\langle \up_{u_i},\up_{u_j}\rangle f = 0 \text{ for } (i,j=1,\dots,N)
	\quad	\text{and}\quad\langle \uu_i,\up_{u_j}\rangle f = 0	 \text{ for } (1\leq i < j\leq N).
	\end{align*}
	The vector space of all simplicial harmonic polynomials, which are $k_i$-homogeneous in the vector variable $\uu_i$ is then denoted by $\mathcal{H}_{k_1,\dots,k_N}(\mathbb{R}^{N m},\mathbb{C})$.
\end{definition}
\section{Symplectic Clifford analysis and the symplectic Fischer decomposition}
\noindent
In this section we review some of the standard facts on symplectic Clifford analysis which can be e.g. found in the work of 
De Bie \cite{BHS}, Habermann \cite{Habermann}, and Hol\'ikov\'a \cite{Hol}. Let $V$ be a symplectic vector space of even dimension equiped with symplectic form $\omega$. Then we define the symplectic Clifford algebra as follows:
\begin{definition}
	 The \textit{symplectic Clifford algebra} $\mathsf{Cl}(V,\omega)$ is defined as the quotient algebra of the tensor algebra $T(V)$ of $V$, by the two-sided ideal $\mathcal{I}_{\omega}$ generated by elements of the form $\{v\otimes u-u\otimes v+\omega(v,u) : u,v\in V\}$. In other words
	$\mathsf{Cl}(V,\omega):=T(V)/\mathcal{I}_{\omega}$
	is the algebra generated by $V$ in terms of the relation $v u-u v=-\omega(v,u)$, where we have omitted the tensor product symbols.
\end{definition}
\noindent
From now, we take $V=(\mathbb{R}^{2m},\omega_0)$ with the symplectic structure $\omega_0=\sum_j dx_j\wedge dy_j$.
\begin{remark}
We note that the (universal) Clifford algebra in the orthogonal setting is defined in a similar way, but using a {\em symmetric} bilinear form. For a comparison between orthogonal and symplectic Clifford algebras we refer the reader to Crumeyrolle's book \cite{Crum}. Note that the impact of this transition from a symmetric to an anti-symmetric form is rather dramatic: the symplectic Clifford algebra is infinite-dimensional, whereas in the orthogonal case it is of finite dimension. This has major consequences concerning the representation theory of the symplectic spinors, in the sense that infinite-dimensional Verma modules will naturally appear.
\end{remark}
\noindent
The symplectic Clifford algebra can also be seen as the Weyl algebra $\mathcal{W}(\mR^{2m})$, i.e. the universal enveloping of the Heisenberg algebra generated by $2m$ commuting variables and their associated partial derivatives, with for instance $[\partial_{x_i},x_j] =\delta_{ij}$. 

\begin{lemma}\label{sprealisation}
The symplectic Lie algebra $\mathfrak{sp}(2m)$ has the following realisation on the space of symplectic spinor-valued polynomials $\mathcal{P}(\mathbb{R}^{2m},\mathbb{C})\otimes\mathcal{P}(\mathbb{R}^m,\C)$:
 	\begin{align}\label{realisaties}
\begin{cases}
X_{jk}=x_j\partial_{x_k}-y_k\partial_{y_j} - (z_k\partial_{z_j}+\frac{1}{2}\delta_{jk})
&\quad j,k=1,\dots,m\quad\qquad m^2
\\Y_{jk}=x_j\partial_{y_k}+x_k\partial_{y_j} -\partial_{z_j}\partial_{z_k}
&\quad j<k=1,\dots,m\quad \frac{m(m-1)}{2}
\\ Z_{jk}=y_j\partial_{x_k}+y_k\partial_{x_j} +  z_jz_k
&\quad j<k=1,\dots,m\quad \frac{m(m-1)}{2}
\\Y_{jj}=x_j \partial_{y_j} -\frac{1}{2}\partial_{z_j}^2
&\quad j=1,\dots,m\qquad\qquad m
\\Z_{jj}=y_j\partial_{x_j}+\frac{1}{2} z_j^2
&\quad j=1,\dots,m\qquad\qquad m
\end{cases}
\end{align} 
\end{lemma}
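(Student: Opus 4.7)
The operators in (3.2) split cleanly as $X_{jk}=X^{xy}_{jk}+X^z_{jk}$, and similarly $Y_{jk}=Y^{xy}_{jk}+Y^z_{jk}$, $Z_{jk}=Z^{xy}_{jk}+Z^z_{jk}$, where the superscript indicates which set of Weyl generators the operator uses:
\begin{align*}
X^{xy}_{jk}&=x_j\partial_{x_k}-y_k\partial_{y_j}, & X^z_{jk}&=-z_k\partial_{z_j}-\tfrac12\delta_{jk},\\
Y^{xy}_{jk}&=x_j\partial_{y_k}+x_k\partial_{y_j}, & Y^z_{jk}&=-\partial_{z_j}\partial_{z_k},\\
Z^{xy}_{jk}&=y_j\partial_{x_k}+y_k\partial_{x_j}, & Z^z_{jk}&=z_jz_k,
\end{align*}
with the obvious $\tfrac12$-factor adjustment in the $j=k$ cases. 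My plan is to recognise each summand as a well-known homomorphism $\sym(2m)\to\mcW(\mR^{3m})$ and to conclude by commutativity.

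For the orbital piece, recall that quadratic polynomials in the coordinates $(x,y)$ on the symplectic space $(\mR^{2m},\omega_0)$, equipped with the Poisson bracket $\{\cdot,\cdot\}$, form a Lie algebra isomorphic to $\sym(2m)$: the sub-spaces $\spa\{x_jy_k\}$, $\spa\{x_jx_k\}$ and $\spa\{y_jy_k\}$ reproduce exactly the Siegel decomposition $\mathfrak{gl}(m)\oplus S^2(\mC^m)\oplus S^2(\mC^m)^\ast$. The associated Hamiltonian vector fields $f\mapsto -\{H,f\}$ acting on $\mcP(\mR^{2m},\mC)$ are precisely $X^{xy}_{jk}$, $Y^{xy}_{jk}$, $Z^{xy}_{jk}$ (up to a consistent choice of signs). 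For the spin piece, the operators $\{z_jz_k,\;\partial_{z_j}\partial_{z_k},\;z_j\partial_{z_k}+\tfrac12\delta_{jk}\}$ are the Fock-model realisation of the Segal-Shale-Weil (metaplectic) representation of $\sym(2m)$ on $\mcP(\mR^m,\mC)$; see \cite{Habermann}. The characteristic correction $\tfrac12\delta_{jk}$ in $X^z_{jj}$ comes from symmetrising the product $z_j\partial_{z_k}$ inside the Weyl algebra and is the usual metaplectic anomaly reflecting that the spin realisation integrates only to the double cover $\mathsf{Mp}(2m)$.

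Because the orbital and spin summands involve disjoint Weyl generators they commute, so for $A,B\in\sym(2m)$ one has
\[
[A^{xy}+A^z,\,B^{xy}+B^z]=[A^{xy},B^{xy}]+[A^z,B^z]=[A,B]^{xy}+[A,B]^z,
\]
which shows the total assignment $A\mapsto A^{xy}+A^z$ is again a Lie algebra homomorphism, provided the signs are chosen so that both summands realise the \emph{same} Siegel representative of $A$; this is precisely the content of the conventions in (3.2). A final dimension check $m^2+m(m-1)+2m=m(2m+1)=\dim\sym(2m)$ confirms surjectivity onto a copy of $\sym(2m)$.

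The main obstacle is bookkeeping: matching sign and index conventions on the two summands so that they identify the same abstract element of $\sym(2m)$ (otherwise the sum would still be a homomorphism, but of a different real form or a twisted version). An alternative, entirely computational route is to verify the Siegel-basis relations directly in $\mcW(\mR^{3m})$ using $[\partial_{\xi_j},\xi_k]=\delta_{jk}$ for $\xi\in\{x,y,z\}$: one checks that the $X_{jk}$ close into a $\mathfrak{gl}(m)$-subalgebra, that $\{Y_{jk}\}$ and $\{Z_{jk}\}$ transform respectively as the symmetric and co-symmetric tensor representations of this $\mathfrak{gl}(m)$, that $[Y_{jk},Y_{\ell n}]=0=[Z_{jk},Z_{\ell n}]$, and that $[Y_{jk},Z_{\ell n}]$ lands back in $\spa\{X_{jk}\}$. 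These are routine Weyl-algebra computations; the only genuinely non-trivial observation is the appearance of the $\tfrac12\delta_{jk}$ in the last bracket, which again is forced by the metaplectic shift already present in $X^z_{jj}$.
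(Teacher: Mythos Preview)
Your argument is correct, and it is considerably more informative than what the paper itself offers: the authors do not prove this lemma at all but simply remark that the realisation ``can be verified by straightforward calculations''. In other words, the paper's proof is the brute-force route you sketch as an alternative at the end.

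Your main line of reasoning --- splitting each operator into an orbital piece (Hamiltonian vector fields of quadratic Hamiltonians on $(\mR^{2m},\omega_0)$) and a spin piece (the Fock realisation of the Segal--Shale--Weil representation on $\mcP(\mR^m,\mC)$), then observing that the two commute so their sum is again a homomorphism --- is genuinely different from the paper's non-proof and explains \emph{why} the formulas work rather than merely asserting that a commutator table closes. It also makes the $\tfrac{1}{2}\delta_{jk}$ shift in $X_{jk}$ transparent as the metaplectic correction, which the direct computation leaves unexplained. The only place you should tighten the exposition is the final sentence: the dimension count $m(2m+1)$ confirms that you have written down the right number of generators, but to conclude that the image is a faithful copy of $\sym(2m)$ you still need linear independence of the listed operators in $\mcW(\mR^{3m})$ (immediate from their distinct leading symbols), not ``surjectivity''. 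With that wording fixed, your conceptual proof is complete and would actually improve on the paper's presentation.
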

\noindent
The power of the realisation above (which can be verified by straightforward calculations) lies in the fact that everything can be formulated on the level of operators acting on for instance polynomials in the matrix variable $(\ux,\uy,\uz) \in \mR^{m \times 3}$. Also note that the operators in $\uz$ are homogeneous of degree 2, which explains why the parity for symplectic spinors is related to the degree in $\uz \in \mR^m$. 
\begin{definition}
	The following operators, acting on $\mathcal{P}(\mR^{m \times 3},\mC)$, are $\mathfrak{sp}(2m)$-invariant:
	\begin{enumerate}[(i)]
		\item The \textit{symplectic Dirac operator}, given by ${D}_s=\langle \underline{z},\underline{\partial}_y\rangle - \langle \underline{\partial}_x,\underline{\partial}_z \rangle$.  
		\item The \textit{dual symplectic Dirac operator} with respect to the symplectic Fischer inner product (see \cite{BHS} for more details), given by $D_s^{\dagger}=\langle \underline{y},\underline{\partial}_z\rangle + \langle \underline{x},\underline{z}\rangle$. 
		\item The \textit{Euler operator} which measures the degree of homogeneity $k$ in the variables $(\ux,\uy)$, given by $\mathbb{E}=\mathbb{E}_x + \mathbb{E}_y$. 
	\end{enumerate}
The invariance of these operators is easily verified, as each of these operators commutes with the generators for $\mathfrak{sp}(2m)$ from the lemma above. These operators also generate a Lie algebra under the commutation bracket: it is isomorphic to $\mathfrak{sl}(2)$, and will be denoted by $\mathfrak{sl}_c(2)$ in this paper (we use the letter `c' the refer to the double `c' in the word \textit{symplectic}).
\end{definition}
\begin{remark}
	In the orthogonal case, the Dirac operator $\up_x$ has a (Fischer) dual given by the vector variable $\ux=\sum_{j = 1}^me_jx_j$ (interpreted as a multiplication operator, acting on spinor-valued functions). In a certain sense, the operator $D_s^{\dagger}$ is the symplectic equivalent of this `vector variable'. However, note that $D_s^{\dagger}$  contains derivatives with respect to the (spinor) variable $\uz$. We further note that $D_s$ and $D_s^{\dagger}$ are \textit{not} homogeneous in the $\uz$-variable.
\end{remark}
\noindent
In \cite{BSS} De Bie, Somberg and Sou\v{c}ek proved a symplectic Fischer decomposition for the symplectic Dirac operator. Their realisation was slightly different, as they worked with the Schwartz space to model symplectic spinors, but this result remains valid in our realisation. It suffices to use the identification $iq_j \mapsto z_j$ and $\partial_{q_j} \mapsto \partial_{z_j}$ for the Dirac operator and to map the Gaussian (i.e. the lowest weight vector for the symplectic spinor space) to the (constant) polynomial $1 \in \mcP(\mR^m,\mC)$. 

\begin{theorem}[Modified symplectic Fischer decomposition]\label{decomp}
Under the action of $\mathfrak{sp}(2m)$ the space of polynomials with values in the space of symplectic spinors decomposes as follows:
\begin{align}\label{decomposition}
\mathcal{P}(\mathbb{R}^{2m},\C)\otimes\mathcal{P}(\mathbb{R}^m,\C) =\bigoplus_{k=0}^{\infty}\bigoplus_{j=0}^{\infty} {(D_s^{\dagger}})^j\mathbb{S}_k^{\infty},
\end{align}
where $\mathbb{S}_k^{\infty}:=\ker_k(D_s)= \ker(D_s) \cap (\mathcal{P}_k(\R^{2m},\C) \otimes \mathcal{P}(\R^m,\C))$.
This gives rise to the following infinite-dimensional triangular structure, with Howe dual pair $\sym(2m) \times \mathfrak{sl}_c(2)$: 
\[
\begin{tikzcd}[row sep=-0.2em,column sep=1em]
\mathcal{P}_0\otimes \mathbb{S} & \mathcal{P}_1\otimes \mathbb{S} & \mathcal{P}_2\otimes \mathbb{S} & \mathcal{P}_3\otimes \mathbb{S} & \mathcal{P}_4\otimes \mathbb{S} & \mathcal{P}_5\otimes \mathbb{S} & \cdots
	\\ \vert\vert & \vert\vert & \vert\vert & \vert\vert & \vert\vert & \vert\vert
	\\ \mathbb{S}_0^{\infty}  & {D^{\dagger}}\mathbb{S}_0^{\infty}  & {D^{\dagger}}^2\mathbb{S}_0^{\infty}  & {D^{\dagger}}^3\mathbb{S}_0^{\infty}  & {D^{\dagger}}^4\mathbb{S}_0^{\infty}  & {D^{\dagger}}^5\mathbb{S}_0^{\infty}
	\\ & \oplus & \oplus & \oplus & \oplus & \oplus
	\\ & \mathbb{S}_1^{\infty}  & {D^{\dagger}}\mathbb{S}_1^{\infty}  & {D^{\dagger}}^2\mathbb{S}_1^{\infty}  & {D^{\dagger}}^3\mathbb{S}_1^{\infty}  & {D^{\dagger}}^4\mathbb{S}_1^{\infty} 
	\\ &  & \oplus & \oplus & \oplus & \oplus
	\\ & & \mathbb{S}_2^{\infty}  & {D^{\dagger}}\mathbb{S}_2^{\infty}  & {D^{\dagger}}^2\mathbb{S}_2^{\infty}  & {D^{\dagger}}^3\mathbb{S}_2^{\infty}
	\\ &  &  & \oplus & \oplus & \oplus
	\\ & & & \mathbb{S}_3^{\infty}  & {D^{\dagger}}\mathbb{S}_3^{\infty}  & {D^{\dagger}}^2\mathbb{S}_3^{\infty}
	\\ & & &  & \oplus & \oplus 
	\\ & & & & \mathbb{S}_4^{\infty}  & {D^{\dagger}}\mathbb{S}_4^{\infty}
	\\ & & & & & \oplus 
	\\ & & & & & \mathbb{S}_5^{\infty}
	\end{tikzcd}
	\]
\end{theorem}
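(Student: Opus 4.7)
The plan is to exploit the hidden $\mathfrak{sl}_c(2)$-symmetry: the operators $D_s$, $D_s^{\dagger}$ and the Euler operator $\mathbb{E}$ generate a copy of $\mathfrak{sl}(2)$ which is Howe dual to $\mathfrak{sp}(2m)$ on the space $\mathcal{P}(\mathbb{R}^{2m},\C)\otimes \mathcal{P}(\mathbb{R}^m,\C)$, and once this is in place the decomposition is a standard Fischer-type consequence. First I would verify the key commutator $[D_s,D_s^{\dagger}]=-(\mathbb{E}+m)$ by expanding both operators and commuting each of $\underline{\partial}_x,\underline{\partial}_y,\underline{\partial}_z$ past the corresponding multiplication operators; together with the homogeneity relations $[\mathbb{E},D_s^{\dagger}]=D_s^{\dagger}$ and $[\mathbb{E},D_s]=-D_s$, this realises $\mathfrak{sl}_c(2)$ and exhibits $D_s^{\dagger}$ as a raising operator and $D_s$ as a lowering operator with respect to the $(\underline{x},\underline{y})$-grading.

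Second, I would invoke the symplectic Fischer inner product of \cite{BHS}, under which $D_s^{\dagger}$ is the formal adjoint of $D_s$. Adjointness on each graded piece of fixed $(\underline{x},\underline{y})$-degree $n$ gives the orthogonal decomposition
\[
\mathcal{P}_n(\mathbb{R}^{2m},\C)\otimes\mathcal{P}(\mathbb{R}^m,\C) \;=\; \mathbb{S}_n^{\infty}\;\oplus\; D_s^{\dagger}\bigl(\mathcal{P}_{n-1}(\mathbb{R}^{2m},\C)\otimes \mathcal{P}(\mathbb{R}^m,\C)\bigr),
\]
and iterating this identity downwards in $n$ writes every $(\underline{x},\underline{y})$-homogeneous polynomial of degree $n$ as a finite sum $\sum_{j=0}^{n}(D_s^{\dagger})^j S_{n-j}$ with $S_{n-j}\in\mathbb{S}_{n-j}^{\infty}$. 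This establishes that the right-hand side of \eqref{decomposition} spans the whole space.

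Third, to prove directness I would use $\mathfrak{sl}_c(2)$ representation theory: any $S\in\mathbb{S}_k^{\infty}$ is a lowest weight vector for $\mathfrak{sl}_c(2)$ (since $D_sS=0$), with strictly positive $(\mathbb{E}+m)$-eigenvalue $k+m$. Standard Verma module arguments then force $(D_s^{\dagger})^jS\neq 0$ for all $j\geq 0$ and guarantee that the partial sum $\sum_{j\geq 0}(D_s^{\dagger})^j\mathbb{S}_k^{\infty}$ is internally direct; combined with the fact that $(D_s^{\dagger})^j\mathbb{S}_k^{\infty}$ sits in $(\underline{x},\underline{y})$-degree $n=k+j$, this separates different values of $k$ inside a given column and yields the triangular picture.

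The main obstacle is the adjointness step, since the symplectic Fischer pairing on the spinor factor $\mathcal{P}(\mathbb{R}^m,\C)$ is not the Euclidean Fock pairing and the matching between the multiplication operators in $\underline{z}$ and the differentiations $\underline{\partial}_z$ is twisted by the symplectic form. A clean way to sidestep this is to transport the decomposition from the Schwartz realisation of \cite{BSS}: the intertwining $iq_j\leftrightarrow z_j$, $\partial_{q_j}\leftrightarrow \partial_{z_j}$ and Gaussian$\leftrightarrow 1$ discussed in the paragraph preceding the theorem carries the $\mathfrak{sp}(2m)\times\mathfrak{sl}_c(2)$-equivariant decomposition into our polynomial model verbatim.
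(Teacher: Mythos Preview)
Your proposal is correct, and your final paragraph is exactly what the paper does: it does not give an independent argument but simply cites \cite{BSS} for the result in the Schwartz model and transfers it to the polynomial model via the identification $iq_j \leftrightarrow z_j$, $\partial_{q_j} \leftrightarrow \partial_{z_j}$, Gaussian $\leftrightarrow 1$. The direct $\mathfrak{sl}_c(2)$-argument you sketch first (commutator $[D_s,D_s^{\dagger}]=-(\mathbb{E}+m)$, Fischer adjointness, and Verma-module directness) is more than the paper provides here, but it is the standard mechanism behind the cited result and is consistent with it.
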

\noindent
In this paper, we are thus interested in the decomposition of the spaces appearing at the left-most edge of this triangle.  

 
\section{A new Fischer decomposition for symplectic spinor-valued polynomials}
\noindent
As we observed several times already, various realisations of the Lie algebra $\mathfrak{sl}(2)$ can be put to use to decompose a space of polynomials into irreducible components for a dual symmetry algebra. First of all, we arrived at the classical Fischer decomposition of the space $\mathcal{P}(\R^m,\C)$ into irreducible $\mathfrak{so}(m)$-modules using the (harmonic) Lie algebra $\spl_h(2)$. Secondly, the space $\mathcal{P}(\R^{2m},\mathbb{S}_0^{\infty})$ of symplectic spinor-valued polynomials was decomposed in $\mathfrak{sp}(2m)$-irreducible representations using the algebra $\mathfrak{sl}_c(2)$.
The goal of this section is to decompose the space \[ \mathcal{P}_1(\mathbb{R}^{2m},\mathbb{C})\otimes \mathbb{S}^{\infty}_0 \cong \mathcal{P}_1(\mathbb{R}^{2m}) \otimes \mathcal{P}(\R^m,\mathbb{C}) \]
using a suitable copy of the Lie algebra $\spl(2)$, for which we will again use a special letter as a subscript. First of all, apart from the algebra $\mathfrak{sl}_h(2)$ introduced earlier (acting on the variable $\uz \in \mathbb{R}^m$ used to represent symplectic spinors) we will also need the following realisation:
\begin{align}\label{sl2}
\spl_s(2) &:= \textup{Alg}\big(\langle \uy,\up_x \rangle, \langle \ux,\up_y \rangle, \mE_y - \mE_x\big),
\end{align}
where we used the subscript `$s$' referring to `skew', a common way to denote these operators in the context of Clifford analysis in several vector variables. Note that $\spl_c(2)$ acts solely on the variables $(\ux,\uy) \in \mathbb{R}^{2m}$, whereas $\spl_h(2)$ deals with the spinor space in the variable $\uz \in \mathbb{R}^m$. \newpage \noindent The direct sum of both (commuting) algebras $\spl_s(2)$ and $\spl_h(2)$ yields a realisation of the orthogonal Lie algebra $\so(4)$, and inside the latter one can then look at the so-called diagonal embedding $\spl_d(2) \hookrightarrow \spl_s(2) \oplus \spl_h(2)$. This will be our suitable realisation mentioned earlier this section. 
\begin{lemma} The operators 
	\begin{align*}
	L &:= \sum_{j = 1}^m Y_{jj} = \langle \ux,\up_y \rangle - \frac{1}{2}\Delta_z\\ 
	R &:= \sum_{j = 1}^m Z_{jj} = \langle \uy,\up_x \rangle + \frac{1}{2}|\uz|^2 \\ 
 \mathcal{E} &:= \mE_y - \mE_x + \mE_z + \frac{m}{2}.
	\end{align*}
generate an algebra isomorphic to $\mathfrak{sl}(2)$. We denote this algebra by $\mathfrak{sl}_{d}(2)$. 
\end{lemma}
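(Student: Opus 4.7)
The plan is to verify the three defining commutation relations of $\mathfrak{sl}(2)$ directly, exploiting the fact that each of the three operators $L$, $R$, $\mathcal{E}$ decomposes as a sum of two pieces acting on disjoint sets of variables. Concretely, write
\[ L = L_s + L_h, \quad R = R_s + R_h, \quad \mathcal{E} = H_s + H_h, \]
with $L_s = \langle \ux, \up_y\rangle$, $R_s = \langle \uy, \up_x\rangle$, $H_s = \mE_y - \mE_x$ living in $\mathfrak{sl}_s(2)$ and acting only on $(\ux, \uy)$, and $L_h = -\tfrac{1}{2}\Delta_z$, $R_h = \tfrac{1}{2}|\uz|^2$, $H_h = \mE_z + \tfrac{m}{2}$ living in $\mathfrak{sl}_h(2)$ and acting only on $\uz$. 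Since the variables $(\ux, \uy)$ and $\uz$ are independent, any operator built from the first set commutes with any operator built from the second, so the cross-terms in the commutators vanish automatically.

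Next I would compute the commutators within each copy. For $\mathfrak{sl}_h(2)$ the relations $[H_h, R_h] = 2R_h$, $[H_h, L_h] = -2L_h$, $[R_h, L_h] = H_h$ follow from the elementary identity $[\Delta_z, |\uz|^2] = 4\mE_z + 2m$. For $\mathfrak{sl}_s(2)$ the analogous computation $[L_s, R_s] = \mE_x - \mE_y = -H_s$, together with $[H_s, L_s] = -2L_s$ and $[H_s, R_s] = 2R_s$, follows from $[\partial_{y_i}, y_j] = \delta_{ij}$ and $[x_i, \partial_{x_j}] = -\delta_{ij}$. Adding the two contributions gives
\[ [\mathcal{E}, L] = -2L, \qquad [\mathcal{E}, R] = 2R, \qquad [R, L] = \mathcal{E}, \]
which is exactly the $\mathfrak{sl}(2)$ relation set with $\mathcal{E} \leftrightarrow H$, $R \leftrightarrow X$, $L \leftrightarrow Y$.

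There is really no obstacle here beyond bookkeeping: the only subtle point is that the two $\mathfrak{sl}(2)$ subalgebras $\mathfrak{sl}_s(2)$ and $\mathfrak{sl}_h(2)$ must be set up with compatible sign conventions so that the diagonal sum indeed closes to $\mathfrak{sl}(2)$ rather than to some twisted variant; this is why the minus sign appears in front of $\tfrac{1}{2}\Delta_z$ in $L$ (so that $L_s$ and $L_h$ both transform with weight $-2$ under $H_s$ and $H_h$ respectively), and dually why $+\tfrac{1}{2}|\uz|^2$ appears in $R$. Once the pieces are aligned this way, the isomorphism $\mathfrak{sl}_d(2) \cong \mathfrak{sl}(2)$ is simply the statement that the diagonal embedding $\mathfrak{sl}(2) \hookrightarrow \mathfrak{sl}_s(2) \oplus \mathfrak{sl}_h(2)$ produces another copy of $\mathfrak{sl}(2)$, as announced in the paragraph preceding the lemma.
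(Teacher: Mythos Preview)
Your proposal is correct and is exactly the approach the paper takes: the paper's proof reads in full ``Follows from straightforward commutators,'' and you have simply spelled out those commutators, organised via the diagonal embedding $\mathfrak{sl}_d(2) \hookrightarrow \mathfrak{sl}_s(2) \oplus \mathfrak{sl}_h(2)$ that the paper announces in the paragraph just before the lemma. There is nothing to add.
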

Note that the operators $Y_{jj}$ and $Z_{jj}$ appeared in Lemma \ref{sprealisation} as generators for the Lie algebra $\mathfrak{sp}(2m)$. 
\begin{proof}
Follows from straightforward commutators.
\end{proof}
\begin{remark}
We have chosen the letters $R$ and $L$ here, for raising and lowering. The role played by these ladder operators will become clear in what follows. Moreover, for polynomials in the variable $\uz \in \mR^m$ only (i.e. symplectic spinors in $\mS^\infty_0$) the Lie algebra $\mathfrak{sl}_d(2)$ can be recognised as $\mathfrak{sl}_h(2)$. This already indicates that the algebra $\spl_d(2)$ generalises the role played by $\spl_h(2)$ in the classical (harmonic) Fischer decomposition. 
\end{remark}
\begin{lemma}
We have the following commuting subalgebras living inside $\mathfrak{sp}(2m)$:
	\[ \mathfrak{sl}_{d}(2)\oplus \so(m) \subset \sym(2m).\]
\end{lemma}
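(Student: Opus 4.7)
The plan is to verify the two assertions packaged into the statement: first, that both $\mathfrak{sl}_d(2)$ and $\mathfrak{so}(m)$ genuinely embed into $\mathfrak{sp}(2m)$ under the realisation of Lemma \ref{sprealisation}, and second, that the two embedded copies commute with trivial intersection, so that their sum is a direct sum of Lie algebras inside $\mathfrak{sp}(2m)$.

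The first embedding is essentially tautological from the preceding lemma: $L = \sum_j Y_{jj}$ and $R = \sum_j Z_{jj}$ are complex linear combinations of the $\mathfrak{sp}(2m)$-generators listed in Lemma \ref{sprealisation}, and the Cartan element $\mathcal{E}$ is (up to sign) the commutator $[L,R]$. For the second embedding, I would pass to the antisymmetric combinations $X_{jk} - X_{kj}$ of the mixed generators; the constant term $\tfrac{1}{2}\delta_{jk}$ drops out automatically, leaving the diagonal rotation generator
$$(x_j\partial_{x_k} - x_k\partial_{x_j}) + (y_j\partial_{y_k} - y_k\partial_{y_j}) + (z_j\partial_{z_k} - z_k\partial_{z_j}),$$
so that $\mathfrak{so}(m)$ appears as the simultaneous rotation of the three vector variables $\ux, \uy, \uz \in \mR^m$.

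The commutation of the two subalgebras will then follow from a general invariance principle rather than a bracket-by-bracket computation. Each generator of $\mathfrak{sl}_d(2)$ is assembled from $\mathfrak{so}(m)$-scalar contractions — the inner products $\langle \ux, \up_y\rangle$ and $\langle \uy, \up_x\rangle$, the norm $|\uz|^2$, the Laplacian $\Delta_z$, and the Euler operators $\mE_x, \mE_y, \mE_z$ — each of which is manifestly invariant under a simultaneous rotation of the three vector variables. Hence $\mathfrak{sl}_d(2)$ is contained in the centraliser of the embedded $\mathfrak{so}(m)$ inside $\mathfrak{sp}(2m)$. Triviality of the intersection is then automatic: any element of $\mathfrak{sl}_d(2) \cap \mathfrak{so}(m)$ must lie in the centre of $\mathfrak{so}(m)$, which vanishes throughout our stable range $m \geq 6$.

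I do not anticipate a genuine obstacle: the entire argument boils down to an invariance check, together with the observation that antisymmetrising the $X_{jk}$ removes the central shift $\tfrac{1}{2}\delta_{jk}$. The only piece of bookkeeping worth double-checking is that the bracket $[L,R]$ really reproduces $\mathcal{E}$ (rather than an affine translate of it), which amounts to the two short computations $[\langle \ux, \up_y\rangle, \langle \uy, \up_x\rangle] = \mE_x - \mE_y$ and $[\Delta_z, |\uz|^2] = 4\mE_z + 2m$ together with the vanishing of all cross terms; but this normalisation has already been ratified by the preceding lemma.
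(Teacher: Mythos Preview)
Your argument is correct and follows essentially the same route as the paper: identify $\mathfrak{so}(m)$ with the diagonal angular momentum operators $\mathcal{L}_{ab} = L_{ab}^{(x)} + L_{ab}^{(y)} + L_{ab}^{(z)}$ and observe that the generators of $\mathfrak{sl}_d(2)$ are rotation-invariant contractions, hence commute with these. You are in fact more thorough than the paper's own proof, which states only the commutation step and does not spell out the embedding of $\mathfrak{so}(m)$ via the antisymmetrisation $X_{jk}-X_{kj}$ or the triviality of the intersection.
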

\begin{proof}
The infinitesimal rotations, realising the regular action of $\mathsf{SO}(m)$ on the level of the Lie algebra, are defined in terms of the angular momentum operators. On $\mC$-valued functions $f(\ux,\uy;\uz)$, these operators are defined as $$\mathcal{L}_{ab} := L_{ab}^{(x)} + L_{ab}^{(y)} + L_{ab}^{(z)},$$ where for instance $L_{ab}^{(x)}=x_a\partial_{x_b}-x_b\partial_{x_a}$ (and similarly for the other vector variables). It then easily follows that operators $\mathcal{L}_{ab}$ commute with the generators of $\mathfrak{sl}_{d}(2)$.
\end{proof}
\noindent
Putting everything together, we will thus be acting with two commuting realisation of the Lie algebra $\mathfrak{sl}(2)$ on the space $\mcP(\mR^{3m},\mC)$. This is summarised in the following: 
\begin{lemma}\label{commuting} 
\mbox{}
	\begin{enumerate}[\normalfont(i)]
		\item The generators of $\mathfrak{sl}_d(2)$ commute with the symplectic Dirac operator and its dual:
		\[ [R,D_s] = [L,D_s] = 0 = [R,D_s^\dagger] = [L,D_s^\dagger]\ . \]
		\item The following realisation of $\so(4)$ acts on $\mS^\infty_0$-valued functions $f(\ux,\uy;\ux)$ on $\mathbb{R}^{2m}$:
		\[ \so(4) \cong \spl_c(2) \oplus \mathfrak{sl}_d(2) \cong \textup{Alg}\big(D_s,D_s^\dagger\big) \oplus \textup{Alg}(L,R). \]
	\end{enumerate}
\end{lemma}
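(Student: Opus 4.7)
The overall plan is to reduce both parts to two already-established facts: that $D_s$ and $D_s^\dagger$ were constructed to be $\sym(2m)$-invariant, and that $\textup{Alg}(L,R,\mathcal{E}) \cong \spl(2)$ per the preceding lemma. For part (i), I would first observe that all three generators of $\spl_d(2)$ already lie inside the realisation of $\sym(2m)$ given in Lemma \ref{sprealisation}. By construction one has $L = \sum_j Y_{jj}$ and $R = \sum_j Z_{jj}$, while summing the diagonal terms in \eqref{realisaties} one finds $\sum_j X_{jj} = \mE_x - \mE_y - \mE_z - \tfrac{m}{2} = -\mathcal{E}$, so $\mathcal{E}$ also sits in $\sym(2m)$. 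The $\sym(2m)$-invariance of $D_s$ and $D_s^\dagger$ then immediately forces the six commutators claimed in (i) to vanish, with no further computation required.

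For part (ii), I would establish the commuting direct sum structure $\spl_c(2) \oplus \spl_d(2)$ inside the Weyl algebra $\mcW(\mR^{3m})$ and then appeal to the classical exceptional isomorphism $\so(4,\mC) \cong \spl(2,\mC) \oplus \spl(2,\mC)$. The identification $\textup{Alg}(D_s, D_s^\dagger) = \spl_c(2)$ is essentially by definition, since $[D_s, D_s^\dagger]$ recovers the Euler operator $\mE$ up to a multiplicative constant; the identification $\textup{Alg}(L,R) = \spl_d(2)$ is the content of the preceding lemma. For the commutativity between the two copies, the four cross-commutators $[L,D_s]$, $[R,D_s]$, $[L,D_s^\dagger]$ and $[R,D_s^\dagger]$ all vanish by (i), while $[\mE,L] = [\mE,R] = [\mE,\mathcal{E}] = 0$ is immediate since each generator of $\spl_d(2)$ preserves the total $(\ux,\uy)$-degree (either raising $\mE_x$ by one and lowering $\mE_y$ by one, or acting trivially on $(\ux,\uy)$ altogether).

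The main subtlety I foresee is not algebraic but notational: one must verify that the triples $(L, R, \mathcal{E})$ and $(D_s, D_s^\dagger, \mE)$ satisfy the $\spl(2)$-relations with compatible normalisations, so that their direct sum really recovers $\so(4)$ on the nose rather than a rescaled or centrally-extended variant. The relations for the first triple are exactly the content of the preceding lemma, and those for the second triple are embedded in the symplectic Fischer decomposition of Theorem \ref{decomp}, so no new calculation should be required beyond tracking constants and confirming that the two $\spl(2)$-summands intersect trivially, which is transparent from their differing $\uz$-homogeneities.
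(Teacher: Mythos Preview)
Your proposal is correct. The paper does not supply a separate proof for this lemma; it is stated as a summary of the preceding discussion, relying implicitly on exactly the two facts you isolate: the $\sym(2m)$-invariance of $D_s$ and $D_s^\dagger$ (asserted in the definition of those operators), and the observation that $L=\sum_j Y_{jj}$ and $R=\sum_j Z_{jj}$ are built from the $\sym(2m)$-generators of Lemma~\ref{sprealisation}. Your argument makes this explicit and is precisely the intended reasoning; the additional check that $\sum_j X_{jj}=-\mathcal{E}$ and the verification that the Cartan elements commute are useful touches that the paper leaves to the reader.
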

\noindent
In view of the fact that $\spl_d(2)$ contains differential operators which are 0-homogeneous in the variables $(\ux,\uy)$, in the sense that they commute with $\mE_x + \mE_y$, one can use the operators $R$ and $L$ to find a Fischer decomposition generalising Theorem \ref{decomp}. This is the topic of the theorem below.
\begin{theorem}[$L$-Fischer decomposition]\label{fischerB}
\mbox{}
	\begin{enumerate}[\normalfont(i)]
		\item The space of $\mS^\infty_0$-valued polynomials which are $k$-homogeneous in $(\ux,\uy)$ decomposes as follows: 
		\[ \mcP_k(\mR^{2m},\mS^\infty_0) = \bigoplus_{j = 0}^\infty R^j \ker_k(L)\ . \]
		\item We have the following decomposition for the full space of polynomials on $\mR^{3m}$: 
		\[ \mcP(\mR^{3m},\mC) \cong \bigoplus_{k = 0}^\infty \bigoplus_{j = 0}^\infty R^j \ker_k(L). \]
	\end{enumerate}
\end{theorem}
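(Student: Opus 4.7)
The plan is to view $\mcP_k(\mR^{2m},\mS^\infty_0) = \mcP_k(\mR^{2m},\mC) \otimes \mcP(\mR^m,\mC)$ as an $\mathfrak{sl}_d(2)$-module and mimic the structure of the classical Fischer decomposition recalled in Section~2. First one checks that $L$, $R$ and $\mcE$ each preserve the $(\ux,\uy)$-bidegree and therefore stabilise this space. On $\mcP_k(\mR^{2m},\mS^\infty_0)$ the operator $\mcE$ acts as $2\mE_y + \mE_z + (m/2 - k)$, so its spectrum equals $\{m/2 - k + s : s \in \mN\}$, each weight space is finite-dimensional (being a finite sum of joint $(\mE_y,\mE_z)$-eigenspaces with $\mE_y \leq k$), and the spectrum is bounded below. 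In the range $m > 2k$ the minimal weight $m/2 - k$ is strictly positive, so every $\mcE$-weight arising in the space avoids the reducibility set $\{0,-1,-2,\dots\}$ for $\mathfrak{sl}(2)$-Verma modules.

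The proof of (i) proceeds by upward induction on the $\mcE$-weight $\mu$. The base case is $\mu = m/2 - k$: a vector of minimal weight satisfies $2\mE_y + \mE_z = 0$ on each bi-homogeneous component, forcing $\mE_y = \mE_z = 0$, hence it is a polynomial in $\ux$ alone, and both pieces of $L = \langle \ux, \up_y \rangle - \tfrac{1}{2}\Delta_z$ annihilate it, so the minimal weight subspace sits inside $\ker_k(L)$. For the inductive step, take $v$ of weight $\mu > m/2 - k$; if $Lv = 0$ we are done, so assume $Lv \neq 0$. Then $Lv$ lies in the weight-$(\mu-2)$ subspace and decomposes, by the induction hypothesis, as $\sum_{j \geq 0} R^j h_j$ with $h_j \in \ker_k(L)$ of respective weight $\mu - 2j - 2$. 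Using the Verma module formula recalled in Section~2, namely $L R^{n} h = -n(\mu_h + n - 1) R^{n-1} h$ for $h \in \ker(L)$ of weight $\mu_h$, and the fact that all the relevant weights are positive, one can explicitly solve for coefficients $c_j$ such that $w := \sum_{j \geq 0} c_j R^{j+1} h_j$ satisfies $Lw = Lv$. Then $v - w \in \ker_k(L)$, hence $v \in \sum_{j \geq 0} R^j \ker_k(L)$.

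For the directness of the sum, suppose that $\sum_{j \geq 0} R^j h_j = 0$ with $h_j \in \ker_k(L)$ and finitely many nonzero terms. Decomposing each $h_j$ into its $\mcE$-weight components and projecting onto a single weight space reduces the relation to one involving at most one homogeneous lowest weight vector per power of $R$. Applying $L$ repeatedly and iterating the formula above then produces a triangular system whose diagonal coefficients are products of nonzero factors (by positivity of the lowest weights), forcing every $h_j$ to vanish. This establishes (i), and part (ii) follows at once by summing (i) over all $k \in \mN$, using $\mcP(\mR^{3m},\mC) = \bigoplus_{k \geq 0} \mcP_k(\mR^{2m},\mS^\infty_0)$.

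The main technical obstacle is the directness step: it requires the Verma modules $\mathbb{V}^\infty_\mu$ generated by lowest weight vectors of $\mcP_k(\mR^{2m},\mS^\infty_0)$ to all be irreducible, which fails if some lowest weight $\mu$ lands in $\{0,-1,-2,\dots\}$. This is exactly the point where the stable range hypothesis (in the form $m > 2k$) enters essentially: it ensures positivity of the minimal weight $m/2 - k$, hence of every lowest weight that can possibly occur, and thereby guarantees the freeness of the $R$-action on $\ker_k(L)$ that underpins the entire decomposition.
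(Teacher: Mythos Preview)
Your argument is correct and rests on the same $\mathfrak{sl}_d(2)$-mechanism as the paper's, but the execution differs in two respects worth flagging. The paper argues via local nilpotency of $L$: given $P$, take the largest $p$ with $L^p P \neq 0$, subtract $c_p R^p L^p P$ to lower $p$, and iterate; directness of the sum is not addressed at all, and the existence of the constant $c_p$ is simply asserted (``this constant easily follows from the commutation relations''). You instead decompose into $\mcE$-weight spaces from the outset and induct upward on the weight, which is cleaner and lets you verify directness explicitly through the triangular system coming from $L R^{n}h = -n(\mu_h + n - 1)R^{n-1}h$.

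Your identification of the range condition $m > 2k$ is a genuine sharpening. Both the paper's constant $c_p$ and your coefficients $c_j$ exist only because every lowest weight $\mu_h \geq \tfrac{m}{2} - k$ avoids the non-positive integers; when $m$ is even and $k \geq \tfrac{m}{2}$ one can have $\mu_h \in \{0,-1,-2,\dots\}$, in which case $R^{1-\mu_h}h$ is again a lowest weight vector, $\ker_k(L)$ is no longer confined to a single $R$-layer, and the sum $\sum_j R^j\ker_k(L)$ is not direct. The paper works under the standing hypothesis $m \geq 6$ and is concerned almost exclusively with $k = 1$, so the issue never surfaces there, but your final paragraph supplies exactly the caveat the general statement needs.
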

\begin{proof}
	(i) This follows from the fact that $R$ and $L$ generate the algebra $\spl_d(2)$. Given any (non-trivial) polynomial $P_k(\ux,\uy;\uz) \in \mcP_k(\mR^{2m},\mS^\infty_0)$, with $k$ the total degree in $(\ux,\uy)$, there always exists a unique integer $p \in \mN$ such that $L^{p+1} P_k(\ux,\uy;\uz) = 0$ but $L^p P_k(\ux,\uy;\uz) \neq 0$. Indeed, the operator $L$ essentially consists of two pieces which will eventually act trivially on a polynomial in $(\ux,\uy;\uz)$: either because the action of $\Delta_z$ has cancelled the variable $\uz$, or because the action of $\langle \ux,\up_y \rangle$ has turned all the variables $y_j$ into the correponding variables $x_j$. One can thus consider the polynomial 
	\[ Q_k(\ux,\uy;\uz) := P_k(\ux,\uy;\uz) - c_p R^p\big(L^p P_k(\ux,\uy;\uz)\big), \]
	whereby $c_p \in \mR_0$ is chosen in such a way that $L^p Q_k(\ux,\uy;\uz) = 0$, which means that one can now repeat the argument for this new polynomial (this constant easily follows from the commutation relations in the Lie algebra generated by $L$ and $R$). It is hereby crucial to note that the operators $R$ and $L$ are homogeneous of degree zero in $(\ux,\uy)$, so that we indeed remain in the space of $k$-homogeneous polynomials.\\ 
(ii) This follows from the observation that $\mcP(\mR^{3m},\mC) \cong \mS^\infty_{0^+} \oplus \mS^\infty_{0^-}$, and that the space of polynomials in $(\ux,\uy)$ can be graded with respect to the parameter $k \in \mN$.
\end{proof}

\begin{remark}
	 Due to the property that the operators $D_s$ and $D^\dagger_s$ commute with $R$ and $L$, it is sufficient to decompose $\ker(L)$ into $\ker(D_s)$ and its orthogonal complement. The action of the raising operator $R$ will not `disturb' this (symplectic) decomposition. As a matter of fact, this $R$-action will conveniently explain the (countably many) multiplicities of orthogonal irreducible representations. Put differently: the Verma modules appearing in the rest of the paper will be defined in terms of the operators $R$ and $L$. This generalises the fact that $|\uz|^{2p}$ can be used to label the infinitely many copies of spaces $(a,0,\ldots,0)_z$ appearing in the decomposition of $\mS^\infty_0$. 
\end{remark}
\noindent
In view of the fact that for $\alpha \neq \beta$ we have that
\[ L\big(K_{k,\alpha}(\ux,\uy;\uz) + K_{k,\beta}(\ux,\uy;\uz)\big) = 0\ \Rightarrow\ L K_{k,\alpha}(\ux,\uy;\uz) = 0 = L K_{k,\beta}(\ux,\uy;\uz)\ , \] 
where $\alpha$ and $\beta$ denote eigenvalues for the operator $\mathcal{E}$, we can decompose the space $\ker_k(L)$ of $k$-homogeneous polynomial solutions for $L$ into $\mcE$-eigenspaces. This is completely similar to the situation where the space of harmonic polynomials is decomposed into homogeneous polynomials, using the classical Euler operator. 
\begin{prop}
Let $k \in \mN$ be a fixed degree of homogeneity in $(\ux,\uy) \in \mR^{2m}$.
\begin{enumerate}[\normalfont(i)]
	\item We have the decomposition
	\begin{align}\label{kerkL}
	\ker_k(L) = \bigoplus_\alpha \ker_{k,\alpha}(L)
	\end{align}
	where for all $K_{k,\alpha}(\uz,\uy;\uz) \in \ker_{k,\alpha}(L)$ we have that
	\[ \begin{cases}
	(\mE_x + \mE_y) K_{k,\alpha}(\ux,\uy;\uz)  =k K_{k,\alpha}(\ux,\uy;\uz)\\ \\
	\mcE K_{k,\alpha}(\ux,\uy;\uz) = \alpha K_{k,\alpha}(\ux,\uy;\uz)\ .
	\end{cases} \]
	\item  The possible eigenvalues for $\mcE$ (for fixed $k \in \mN$) are given by the set
	\[ \sigma_k(\mcE) = \left\{\frac{m}{2} - k + j \mid j \in \mN\right\} . \]
	\item For fixed $k \in \mN$ and fixed degree of homogeneity in $\uz$, say $a \in \mN$, the eigenvalues range from $\frac{m}{2} - k + a$ to $\frac{m}{2} + k + a$ (all equal to each other modulo 2).
\end{enumerate}
\end{prop}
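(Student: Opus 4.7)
The plan hinges on the tri-grading of $\mcP(\R^{3m},\C)$ by the three Euler operators $\mE_x, \mE_y, \mE_z$ and on the behaviour of $L = \langle \ux, \up_y \rangle - \tfrac{1}{2}\Delta_z$ under this grading.

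For (i), the first step is to compute
\[ [\mcE, L] = -2L \qquad \text{and} \qquad [\mE_x + \mE_y, L] = 0, \]
using $[\mE_y - \mE_x, \langle \ux, \up_y\rangle] = -2\langle \ux, \up_y\rangle$, $[\mE_z, \Delta_z] = -2\Delta_z$, and the fact that $L$ is $0$-homogeneous in $(\ux,\uy)$ jointly. The first relation gives $L(\mcE f) = (\mcE L + 2L)f = 0$ for $f \in \ker(L)$, so $\mcE$ stabilises $\ker(L)$ (without commuting with $L$); the second says that $\mE_x + \mE_y$ commutes with $L$. Both operators act semisimply on $\mcP(\R^{3m},\C)$ and commute with each other, so restricting to the invariant subspace $\ker_k(L)$ yields the claimed decomposition into joint eigenspaces.

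For (ii), I would decompose any element of $\mcP_k(\R^{2m}) \otimes \mcP(\R^m)$ into tri-homogeneous pieces $f_{k_x, k_y, a}$ with $k_x + k_y = k$ and $a \geq 0$, on which $\mcE$ acts with eigenvalue $\tfrac{m}{2} + (k_y - k_x) + a$. The minimum of this quantity is $-k$, attained at $(k_x, k_y, a) = (k, 0, 0)$, giving $\alpha \geq \tfrac{m}{2} - k$. Each value $\tfrac{m}{2} - k + j$ with $j \in \mN$ is then realised explicitly by $x_1^k h_j(\uz)$, where $h_j \in \mcH_j(\R^m)$ is any non-zero harmonic polynomial: this lies in $\ker(L)$ because it is $\uy$-independent and harmonic in $\uz$, and its $\mcE$-eigenvalue is $\tfrac{m}{2} - k + j$. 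This identifies $\sigma_k(\mcE)$ with $\{\tfrac{m}{2} - k + j : j \in \mN\}$.

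For (iii), the bounds and parity follow from the same tri-graded analysis: if the $\uz$-degree $a$ component of $f \in \ker_{k,\alpha}(L)$ is non-zero, its bi-degree $(k_x, k_y)$ is determined by $k_y - k_x = \alpha - \tfrac{m}{2} - a$ together with $k_x + k_y = k$, and the constraints $0 \leq k_x, k_y \leq k$ produce exactly the interval $\alpha \in [\tfrac{m}{2} + a - k,\, \tfrac{m}{2} + a + k]$ with $\alpha \equiv \tfrac{m}{2} + a + k \pmod{2}$. For the converse, I would construct an eigenvector with non-trivial $a$-slice by choosing any non-zero $f_a \in \mcP_{k_x, k_y}(\ux, \uy) \otimes \mcH_a(\uz)$ (this space is non-empty in the stable range $m \geq 6$) and defining the higher $\uz$-slices by
\[ f_{a + 2i} := c_i\, |\uz|^{2i} \langle \ux, \up_y\rangle^i f_a, \qquad i = 0, 1, \ldots, k_y, \]
with $c_0 = 1$ and $c_{i+1} = c_i / \bigl[(i+1)(m + 2a + 2i)\bigr]$; this choice uses the classical identity $\Delta_z(|\uz|^{2i} g) = 2i(2i + 2a + m - 2)|\uz|^{2i-2} g$ for $g \in \mcH_a(\uz)$. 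The sum $f := \sum_{i = 0}^{k_y} f_{a + 2i}$ then lies in $\ker_k(L)$ by a slice-by-slice cancellation of $\langle \ux, \up_y\rangle f_{a+2i}$ against $\tfrac{1}{2}\Delta_z f_{a + 2(i+1)}$, and the recursion terminates automatically at $i = k_y$ because $\langle \ux, \up_y\rangle^{k_y + 1}$ annihilates every element of bi-degree $(k_x, k_y)$.

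The subtle point, and the main obstacle, is that one cannot prove (iii) by intersecting $\ker(L)$ with the subspace pure in $\uz$-degree $a$: there the two summands of $L$ would have to vanish independently, forcing $\langle \ux, \up_y \rangle f = 0$ and thereby restricting $\alpha$ to only $[\tfrac{m}{2} + a - k, \tfrac{m}{2} + a]$ (the lowest-weight side for $\spl_s(2)$). The full symmetric interval in the statement is only reached by allowing an eigenvector to mix several consecutive $\uz$-degrees, and the recursion above is precisely what absorbs the obstruction $\langle \ux, \up_y\rangle f_a$ into the next slice.
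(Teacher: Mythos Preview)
Your proof is correct and rests on the same idea as the paper's: read off the spectrum of $\mcE = \mE_y - \mE_x + \mE_z + \tfrac{m}{2}$ from the tri-grading in $(\ux,\uy,\uz)$. The paper's proof is essentially a one-line remark (minimum for polynomials in $(\ux,\uz)$ only, maximum for polynomials in $(\uy,\uz)$ only), whereas you go further in part (iii): you correctly observe that a $\uz$-pure eigenvector in $\ker(L)$ would force $\langle \ux,\up_y\rangle f = 0$ and hence miss the upper half of the interval, and you supply the recursive construction $f_{a+2i} = c_i|\uz|^{2i}\langle \ux,\up_y\rangle^i f_a$ that realises every eigenvalue in $[\tfrac{m}{2}+a-k,\tfrac{m}{2}+a+k]$ inside $\ker_k(L)$. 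This is a genuine addition---the paper's phrasing of (iii) is informal enough that it does not actually verify realisability in $\ker(L)$, only on the ambient tri-graded space.
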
 
\begin{proof}
It suffices to recall that $\mcE = \mE_y - \mE_x + \mE_z + \frac{m}{2}$ to understand the structure of the spectrum. For fixed degree in $\uz \in \mR^m$, the minimal (resp. maximal) eigenvalue is obtained for a polynomial depending on $\ux$ and $\uz$ only (resp. $\uy$ and $\uz$ only). 
\end{proof}
\noindent
Invoking Theorem \ref{fischerB} (the $L$-Fischer decomposition) and using equality (\ref{kerkL}) we can then decompose $\mcP_1(\mR^{2m},\mS^\infty_0)$ as follows:
\[ \mcP_1(\mR^{2m},\mS^\infty_0) = \bigoplus_{j = 0}^\infty \bigoplus_{a=0}^{\infty} R^j \ker_{1,\frac{m}{2} - 1 + a}(L)\ . \]
The index $\alpha = \frac{m}{2} - 1 + a$ hereby denotes the $\mcE$-eigenvalue. To localise the solutions for $L$ in this space, we will rely on techniques from representation theory (for tensor products). To that end, we note that the operator $L$ can (in tensor notation) also be written as 
 $$L = Y_s \otimes \textup{Id} + \textup{Id} \otimes Y_h,$$ with $Y_s$ and $Y_h$ the `lowering' operator in their respective copies for $\spl(2)$. This means that we can characterise $L$-solutions as lowest weight vectors in tensor products of irreducible representations of $\spl_s(2)$ and $\spl_h(2)$ respectively. Note that these representations are of a completely different nature though: for $\spl_s(2)$ one will always need {\em finite} representations (as the space $\mcP_k(\mR^{2m},\mC)$ is finite), whereas for $\spl_h(2)$ we need {\em infinite} Verma modules. \\
\\ 
To be more precise, we note that for all indices $1 \leq j \leq m$ one has a copy of the standard representation $\mC^2 \cong \mV_1$ for $\spl_s(2)$. This space is spanned by the highest weight vector $y_j$ (weight: $+1$) and the corresponding lowest weight vector $x_j$ (weight: $-1$). For the algebra $\spl_h(2)$ one has the (lowest weight) Verma modules generated by the repeated action of $|\uz|^2$ on a homogeneous harmonic $H_a(\uz)$ of degree $a$ in $\uz\in\R^m$. Now, by observing that the tensor product decomposes a
\begin{align}\label{Verma_tensor}
\mV_1 \otimes \mV^\infty_{a + \frac{m}{2}} \cong \mV^\infty_{a - 1 + \frac{m}{2}} \oplus \mV^\infty_{a + 1 + \frac{m}{2}}\ ,
\end{align}
we expect two contributions to $\ker(L)$. As a matter of fact, the lowest weight vectors of the Verma modules at the right-hand side contain the desired solutions for the operator $L$. It is crucial to point out here that the lowest weight vector belongs to $\ker(L)$, and that the module itself is generated by repeated $R$-action. Put differently, the Verma modules at the right-hand side are irreducible representations for the algebra $\spl_d(2)$. Note also that the tensor product above appears as many times in our $L$-Fischer decomposition as the dimension of $\mC^m$ and $\mcH_a(\mR^m,\mC)$ respectively (since one can freely choose an index $1 \leq j \leq m$ and a basis for the space of harmonic polynomials in $\uz$). Describing these multiplicities in terms of irreducible representations for the algebra $\so(m)$, will allow us to make the $L$-Fischer decomposition multiplicity-free under a dual action.  \\
\\
\noindent
 explicitly construct these $L$-solutions, thereby this providing a proof for the tensor product (\ref{Verma_tensor}) from above.
\begin{lemma}\label{kerL}
	If $\Delta_z P_k(\ux,\uy;\uz) = 0$, we have that $P_k(\ux,\uy;\uz) \in \ker(L^{k+1})$.
\end{lemma}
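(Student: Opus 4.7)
The plan is to exploit the fact that $L$ splits as a difference of two commuting operators, one acting only on $(\ux,\uy)$ and one acting only on $\uz$. Writing $A := \langle \ux, \up_y \rangle$ and $B := \tfrac{1}{2}\Delta_z$, we have $L = A - B$ and clearly $[A,B] = 0$ since the two operators act on disjoint sets of variables. The binomial theorem then gives
\begin{equation*}
L^{k+1} = (A - B)^{k+1} = \sum_{j=0}^{k+1} \binom{k+1}{j} (-1)^j A^{k+1-j} B^j.
\end{equation*}

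First I would handle the terms with $j \geq 1$. Since $A$ commutes with $B$, each such term equals $\binom{k+1}{j}(-1)^j A^{k+1-j} (\tfrac{1}{2}\Delta_z)^j$ applied to $P_k$. The hypothesis $\Delta_z P_k = 0$ propagates to $\Delta_z^j P_k = 0$ for every $j \geq 1$, so all of these contributions vanish immediately.

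Next I would address the remaining $j=0$ term, namely $A^{k+1} P_k = \langle \ux,\up_y\rangle^{k+1} P_k$. The key observation here is that $\langle \ux,\up_y\rangle$ preserves the total degree $k$ in $(\ux,\uy)$ while strictly lowering the $\uy$-degree by one on each monomial (and raising the $\ux$-degree by one). Since $P_k$ is $k$-homogeneous in $(\ux,\uy)$, its $\uy$-degree is at most $k$, and therefore $\langle \ux,\up_y\rangle^{k+1}$ annihilates it.

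Combining both observations shows $L^{k+1}P_k = 0$, which is exactly the claim. There is no real obstacle here — the argument is a one-line commutation observation plus a trivial degree count — so the only thing I would double-check is that the $\uz$-dependence plays no role in the $A^{k+1}$ argument (it does not, since $A$ is $\uz$-free), and that harmonicity is used in its strongest form $\Delta_z^j P_k = 0$ for all $j \geq 1$, which is automatic from $\Delta_z P_k = 0$.
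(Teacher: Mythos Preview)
Your argument is correct and is essentially the same as the paper's, just made more explicit: the paper simply observes that on $\uz$-harmonic polynomials the operator $L$ reduces to $\langle \ux,\up_y\rangle$, which iterates (since $\langle \ux,\up_y\rangle$ commutes with $\Delta_z$) to give $L^{k+1}P_k = \langle \ux,\up_y\rangle^{k+1}P_k = 0$ by the degree count in $\uy$. Your binomial expansion is a clean way of packaging exactly the same two ingredients --- the commutation $[A,B]=0$ and the separate vanishing of $A^{k+1}P_k$ and $BP_k$ --- so there is no substantive difference.
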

\begin{proof}
This easily follows from the fact that the operator $L$ reduces to $\langle \ux,\up_y \rangle$ when acting on harmonics in the variable $\uz$.
\end{proof}
\noindent
This means that the explicit description of the contributions to $\ker(L)$ may require a projection on a subspace. In order to have a notation (and a method) which works for all degrees $k \in \mN$, we introduce the following projection operator (see \cite{Zhelobenko}).
\begin{definition}
The \textit{extremal projector} for the Lie algebra $\mathfrak{sl}(2)=\operatorname{Alg}\{X,Y,H\}$ is the idempotent operator $\Pi$ (meaning that $\Pi^2=\Pi$) given by the expression 
\begin{align}
\Pi : = 1 + \sum_{j=1}^{\infty}\frac{(-1)^j}{j!} \frac{\Gamma(H+2)}{\Gamma(H+2+j)}Y^jX^j\ .
\end{align}
This operator satisfies $X\pi = \pi Y = 0$. Note that this operator is defined on an extension $\mathcal{U}'(\mathfrak{sl}(2))$ of the universal enveloping algebra so that formal series containing the operator $H$ in the denominator are well-defined (in practice we will only need a finite summation). 
\end{definition}
\begin{lemma}
The contributions to $\ker_{1,\alpha}(L)$ can be listed as follows: 
\begin{enumerate}[\normalfont(i)]
\item For all $a \in \mN$ and $1 \leq j \leq m$ one has that $L\big(x_j H_a(\uz)\big) = 0$, which means that 
\[ (1)_x \otimes (a)_z \subset \ker_{1,a - 1 + \frac{m}{2}}(L)\ . \]
\item For all $a \in \mN$ and $1 \leq j \leq m$ one has that $L^2\big(y_j H_a(\uz)\big) = 0$, which means that 
\[ \Pi_L\bigg( (1)_y \otimes (a-2)_z \bigg)\subset \ker_{1,a + 1 + \frac{m}{2}}(L)\ .\]
The operator $\Pi_L$ is hereby defined as the projection operator (truncated after the first term), which means that 
\[ \Pi_L\big(y_iH_a(\uz)\big) = \left(1 + \frac{1}{\mcE - 2}RL\right)\big(y_iH_a(\uz)\big) = \frac{(2a + m)y_iH_a(\uz) + x_i|\uz|^2H_a(\uz)}{2a + m - 2}\ .  \]
\end{enumerate}
\end{lemma}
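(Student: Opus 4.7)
The plan is to verify both statements by direct computation, using the explicit realisation $L = \langle\ux,\up_y\rangle - \frac{1}{2}\Delta_z$, the harmonicity condition $\Delta_z H_a(\uz) = 0$, and the weight formula $\mcE = \mE_y - \mE_x + \mE_z + \frac{m}{2}$.

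For part (i), applying $L$ to $x_j H_a(\uz)$ is immediate: the piece $\langle\ux,\up_y\rangle$ annihilates $x_j H_a$ because there is no $\uy$-dependence, while $\Delta_z$ commutes with $x_j$ and kills the harmonic $H_a$. The $\mcE$-eigenvalue of $x_j H_a$ computes to $-1 + a + \frac{m}{2}$, matching the announced $\alpha = a - 1 + \frac{m}{2}$. Since $L$ and $\mcE$ commute with the diagonal $\so(m)$-action and $\{x_j\}_{j=1}^m$ carries $(1)_x$ while $\mcH_a(\mR^m,\mC)$ carries $(a)_z$, the whole tensor product lands in $\ker_{1,\alpha}(L)$.

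For part (ii), a direct calculation gives $L(y_j H_a(\uz)) = x_j H_a(\uz)$, the $\Delta_z$-term again vanishing by harmonicity. Applying $L$ once more and invoking part (i), one obtains $L^2(y_j H_a) = 0$; the $\mcE$-eigenvalue of $y_j H_a$ is $a + 1 + \frac{m}{2}$. The key observation is that because $L^2$ annihilates $y_j H_a$, all terms in the extremal projector involving $R^k L^k$ with $k \geq 2$ vanish on this vector, so $\Pi_L$ reduces to its truncation $1 + \frac{1}{\mcE - 2}RL$ on this input. Evaluating this requires only the short calculation $RL(y_j H_a) = R(x_j H_a) = y_j H_a + \tfrac{1}{2}x_j|\uz|^2 H_a$, combined with the fact that on this $\mcE$-weight space the operator $\mcE - 2$ acts as the scalar $a - 1 + \frac{m}{2}$; collecting terms produces the displayed formula.

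To close the argument I would verify directly that the resulting expression lies in $\ker(L)$ by applying $L$ once more and using $\Delta_z(|\uz|^2 H_a) = (4a + 2m) H_a$: the contributions $(2a+m)x_j H_a$ and $-(2a+m)x_j H_a$ cancel. The only step requiring conceptual care is the justification of the projector truncation, and this rests entirely on the elementary fact $L^2(y_j H_a) = 0$. The identification with the corresponding summand in the Verma tensor product $\mV_1 \otimes \mV^\infty_{a+m/2} \cong \mV^\infty_{a-1+m/2} \oplus \mV^\infty_{a+1+m/2}$ is then automatic, since $\Pi_L$ is built from $\so(m)$-invariants and therefore preserves isotypic components.
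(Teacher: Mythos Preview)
Your proof is correct and follows essentially the same route as the paper's: for (i) the direct annihilation argument is identical, and for (ii) both you and the paper observe that $L^2$ kills $y_jH_a(\uz)$ (the paper packages this as Lemma~\ref{kerL}), so the extremal projector truncates to $1+\frac{1}{\mcE-2}RL$. Your explicit evaluation of $RL(y_jH_a)$ and the final direct check that $L$ annihilates the resulting expression are additional computational verifications not spelled out in the paper, but they do not constitute a different method.
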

\begin{proof}
\par (i) For $P_1(\ux;\uz) \in \ker(\Delta_z)$ we immediately have that $LP_1(\ux;\uz) = 0$. This means that $(1)_x\otimes (a)_z\in \ker_{1,\alpha}(L)$ where $\alpha$ is then given by $\alpha=a-1+\frac{m}{2}$. \\
(ii) For the second statement, we observe that for $P_1(\uy;\uz)\in \ker(\Delta_z)$ one has that
\[ LP_1(\uy;\uz) = \left(\langle \ux,\up_y \rangle - \frac{1}{2}\Delta_z\right)P_1(\uy;\uz) \neq 0\ .\] 
Invoking {Lemma \ref{kerL}}, it is clear that we can now use the extremal projector $\Pi_L$ to project $P_1(\uy;\uz)$ on the kernel of $L$. In this specific case, the lemma above tells us that the infinite series reduces to a sum of two terms only (as $L^2$ acts trivially). The truncated extremal projector is thus given by
$\Pi_L = 1 + \frac{1}{\mcE - 2}RL$
and is well-defined as the operator $(\mcE - 2)$ always acts non-trivially (recall that $m \geq 6$ in our case of interest). Consequentially, we obtain 
\[ \Pi_L\bigg( (1)_y \otimes (a)_z \bigg)\ \subset\ \ker_{1,\beta}(L)\ , \] 
where the $\mathcal{E}$-eigenvalue is $\beta=\frac{m}{2}+1+a$. 
\end{proof}
\noindent
If one wants to fix the eigenvalue $\alpha$, this gives the following (note that we identify highest $\so(m)$-weights with a space of harmonic polynomials, so that everything should be read as an equality of polynomial spaces): 
\begin{corollary}
Take $\alpha = \frac{m}{2} - 1 + a \in \sigma_1(\mcE)$ fixed. 
\begin{enumerate}[\normalfont(i)]
	\item For $a \in \mN_0$ an even integer we get that
	\begin{eqnarray*}
		\ker_{1,\frac{m}{2} - 1 + a}(L) &=& \bigg((1)_x \otimes (a)_z\bigg)\ \oplus\ \Pi_L\bigg((1)_y \otimes (a-2)_z\bigg)\ 
	\end{eqnarray*}
	In the degenerate case $a=0$ this reduces to $\ker_{1,\frac{m}{2} - 1}(L) = (1)_x$. 
	\item for $a \in \mN \setminus \{1\}$ an odd integer we get that
	\begin{eqnarray*}
		\ker_{1,\frac{m}{2} - 1 + a}(L) &=& \bigg((1)_x \otimes (a)_z\bigg)\ \oplus\ \Pi_L\bigg((1)_y \otimes (a-2)_z\bigg)\ .
	\end{eqnarray*}
	In the degenerate case $a=1$ this reduces to $\ker_{1,\frac{m}{2}}(L) = (1)_x \otimes (1)_z$. 
\end{enumerate}
\end{corollary}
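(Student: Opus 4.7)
The strategy is to combine the previous lemma (which shows the two listed subspaces are contained in $\ker_{1,\alpha}(L)$) with the Clebsch--Gordan decomposition (\ref{Verma_tensor}) and the classical Fischer decomposition of $\mcP(\mR^m,\mC)$ to show that these subspaces \emph{exhaust} the kernel at eigenvalue $\alpha = \frac{m}{2}-1+a$. The argument therefore splits into: (a) building a complete list of contributions by an $\spl_d(2)$-structural argument, (b) matching them to the explicit vectors from the lemma, and (c) handling the degeneracies at $a=0,1$.

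First I would unpack the $\so(m)\times \spl_d(2)$-structure of the ambient space. As an $\spl_s(2)$-module, $\mcP_1(\mR^{2m},\mC)$ is a sum of $m$ copies of the standard representation $\mV_1$ (spanned by $\{y_j,x_j\}$ with $y_j$ the highest weight), and by the classical Fischer decomposition, $\mcP(\mR^m,\mC)$ is a sum of Verma modules $\mcH_{a'}(\mR^m,\mC)\otimes \mV^\infty_{a'+m/2}$ under $\spl_h(2)$. Since $L = Y_s\otimes \Id + \Id\otimes Y_h$ is precisely the diagonal lowering generator, the kernel $\ker_{1,\alpha}(L)$ coincides with the $\spl_d(2)$-lowest-weight space at weight $\alpha$ inside
\[ \bigoplus_{a'\ge 0}\, \mC^m\otimes \mcH_{a'}(\mR^m,\mC)\otimes \bigl(\mV_1\otimes \mV^\infty_{a'+m/2}\bigr). \]
Applying the Clebsch--Gordan rule $\mV_1\otimes \mV^\infty_{a'+m/2}\cong \mV^\infty_{a'-1+m/2}\oplus\mV^\infty_{a'+1+m/2}$, only two values of $a'$ can contribute an $\spl_d(2)$-lowest weight equal to $\alpha = a-1+m/2$: namely $a'=a$ (from the first summand) and $a'=a-2$ (from the second summand, provided $a\geq 2$).

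Next I would identify these two abstract contributions with the two concrete subspaces from the statement. The first lowest-weight space is spanned by $x_jH_a(\uz)$ with $H_a\in \mcH_a$, which the lemma shows lies in $\ker(L)$; this is the module $(1)_x\otimes (a)_z$. The second lowest-weight space sits inside $(1)_y\otimes (a-2)_z$ but is not itself $L$-annihilated, so I apply the truncated extremal projector $\Pi_L = 1 + (\mcE-2)^{-1}RL$ to obtain it explicitly --- this is exactly what the lemma computes, and it lands in $\ker(L)$ because $L^2$ already annihilates $(1)_y\otimes (a-2)_z$ (by the $\Delta_z$-harmonicity of the $\uz$-factor). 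The sum is direct because the two contributions come from non-isomorphic Verma modules in the Clebsch--Gordan decomposition, or equivalently, because they project onto distinct $\mcE$-weight lines in distinct summands; one can also verify directness concretely by noting that the pure $x_j H_a(\uz)$ piece carries no $y_j$-factor while every nonzero element of $\Pi_L((1)_y\otimes (a-2)_z)$ does.

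Finally, the degenerate cases follow by truncation: when $a=0$ or $a=1$, the index $a'=a-2$ is negative, so the space $\mcH_{a-2}(\mR^m,\mC)$ does not exist and the second Clebsch--Gordan branch is absent from the decomposition above. Only $(1)_x\otimes (a)_z$ remains, which reduces to $(1)_x$ for $a=0$ and to $(1)_x\otimes (1)_z$ for $a=1$, as claimed. The main obstacle in this plan is the exhaustiveness claim in step one: once one accepts the Clebsch--Gordan identity (\ref{Verma_tensor}) for the diagonal $\spl_d(2)\subset \spl_s(2)\oplus \spl_h(2)$ (which the paper has essentially announced), everything else is bookkeeping on eigenvalues and a routine invocation of the lemma.
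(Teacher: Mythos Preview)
Your proposal is correct and follows precisely the line of reasoning the paper intends: the Corollary is stated without a separate proof, as it is meant to follow immediately from the preceding Lemma (giving the inclusions) together with the tensor product decomposition~(\ref{Verma_tensor}) and the classical Fischer decomposition (giving exhaustiveness). Your write-up simply makes this implicit argument explicit, including the bookkeeping on $\mcE$-eigenvalues and the degenerate cases $a\in\{0,1\}$, so there is nothing to add.
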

\noindent
This result allows us to rephrase Theorem \ref{fischerB} in the following way, hence making the Fischer decomposition for $\mS^\infty_0$-valued polynomials of degree $k = 1$ more explicit.
\begin{theorem} \label{branching}
In terms of $\mathfrak{so}(m)$-irreducible representations, we have
\begin{align}\label{decomposition}
\mathcal{P}_1(\R^{2m},\mathbb{S}_0^{\infty}) = \bigoplus_{j=0}^{\infty}R^j \left(\nu_1'\oplus \nu_2' \right)\ \oplus\ \bigoplus_{j=0}^{\infty}\bigoplus_{a=2}^{\infty}R^j\left(	\nu_1\oplus\nu_2 	\right)\ .
\end{align}
The components in the decomposition are given by 
\[ \begin{array}{cclccccl}
\nu_1' & = & (1)_x &&& \nu_1 & = & (1)_x \otimes (a)_z\\
\nu_2' & = & (1)_x \otimes (1)_z &&& \nu_2 & = & \Pi_L\bigg((1)_y \otimes (a-2)_z\bigg) 
\end{array}\ .	\]
The primed contributions are the `degenerate' cases (low values for $a \in \mN$). 
\end{theorem}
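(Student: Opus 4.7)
The plan is to assemble the theorem directly from the three structural results already obtained in this section; no fundamentally new computation beyond those in the preceding corollary is required. First, Theorem \ref{fischerB}(i) reduces the problem to understanding $\ker_1(L)$ as an $\mathfrak{so}(m)$-module, via
\[ \mcP_1(\mR^{2m},\mS^\infty_0) = \bigoplus_{j=0}^\infty R^j \ker_1(L). \]
The $\mcE$-eigenspace decomposition (\ref{kerkL}), combined with the description $\sigma_1(\mcE) = \{\tfrac{m}{2}-1+a : a \in \mN\}$ of the spectrum in degree $k=1$, then refines this further to
\[ \ker_1(L) = \bigoplus_{a=0}^\infty \ker_{1,\frac{m}{2}-1+a}(L), \]
so the theorem reduces to identifying each $\mcE$-eigenspace $\ker_{1,\frac{m}{2}-1+a}(L)$ as an $\so(m)$-module and subsequently re-indexing the double direct sum.

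The identification is the content of the corollary preceding the theorem. The natural approach is to split the outer sum over $a$ into the two degenerate cases $a \in \{0,1\}$ and the generic range $a \geq 2$. For $a=0$ the corollary gives $\ker_{1,\frac{m}{2}-1}(L) = (1)_x = \nu_1'$, and for $a=1$ it gives $\ker_{1,\frac{m}{2}}(L) = (1)_x \otimes (1)_z = \nu_2'$; these two cases must be isolated because the second putative summand $\Pi_L((1)_y \otimes (a-2)_z)$ only makes sense once $a \geq 2$. For each $a \geq 2$ the same corollary yields the two generic contributions $\nu_1 \oplus \nu_2$. Applying $\bigoplus_{j\geq 0} R^j$ to each eigenspace and collecting the $a = 0,1$ terms into the first outer summand and the $a \geq 2$ terms into the second then reproduces the displayed formula.

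The main obstacle is not a computation but the careful bookkeeping of these low-$a$ boundary effects: recognising that the generic two-summand pattern collapses to a single summand when $a-2$ would be negative, and correctly matching the surviving pieces with $\nu_1'$ and $\nu_2'$. Two secondary points deserve a short verification. First, $\Pi_L$ acts nontrivially on $(1)_y \otimes (a-2)_z$ for $a \geq 2$; this is immediate from the explicit formula in the preceding lemma, since the denominator $2a+m-2$ never vanishes under the stable-range assumption $m \geq 6$. Secondly, the spaces $R^j \ker_{1,\alpha}(L)$ form a genuine direct sum for varying $j$; this follows from the Verma-module structure already used in the proof of Theorem \ref{fischerB}, as $R$ is the raising operator of $\spl_d(2)$ and every element of $\ker_1(L)$ is, by construction, a lowest weight vector for this algebra.
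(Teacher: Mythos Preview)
Your proposal is correct and follows essentially the same route as the paper: invoke the $L$-Fischer decomposition from Theorem~\ref{fischerB}, refine via the $\mcE$-eigenspace splitting~(\ref{kerkL}), separate the degenerate cases $a\in\{0,1\}$ from the generic range $a\geq 2$, and plug in the preceding corollary. Your additional remarks on the nonvanishing of the denominator in $\Pi_L$ and the genuineness of the direct sum over $j$ are sound but go beyond what the paper records; the paper's own proof is a two-line bookkeeping exercise identical in substance to your first three paragraphs.
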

\begin{proof} It suffices to note that
	\begin{align*}
	\mcP_1(\mR^{2m},\mS^\infty_0) &= \bigoplus_{j = 0}^\infty \bigoplus_{a \in \mN} R^j \ker_{1,\frac{m}{2} - 1 + a}(L) 
	\\&= \bigoplus_{j=0}^{\infty} R^j\left(\left(\ker_{1,\frac{m}{2}-1}(L) \oplus \ker_{1,\frac{m}{2}}(L) \right)\oplus \bigoplus_{a \geq 2}^{\infty}\ker_{1,\frac{m}{2}-1+a}(L)\right)\ ,
	\end{align*}
and to plug in the conclusion of the previous corollary. 
\end{proof}
\noindent
The following table lists the contributions to $\ker_{1,\alpha}(L)$ in a different way: for a given $a \in \mN$ (left-most column) one can see which $\so(m)$-tensor products contribute to the kernel. For each of these tensor products the $\mcE$-eigenvalue is listed too, together with the Verma module obtained by repeated $R$-action on $\ker(L)$. Note that this Verma module appears as many times as the dimension of the tensor product from the second column, as this is how many times the lowest weight vector for the Verma module is counted. 
\begin{corollary}\label{table_ker}
The space $\ker_{1,\alpha}(L)$ can be characterised as follows: 
\[ \begin{array}{|lc|lc|lc|l|}
\hline
a \in \mathbb{N} && \textup{subspace of }\ker_{1,\alpha}(L) && \mcE-\textup{eigenvalue}\ \alpha && \textup{Verma module}\\
\hline\hline
a = 0 && (1)_x && \frac{m}{2} - 1 && \mV^\infty_{\frac{m}{2} - 1}\\
\hline
a = 2j > 0 && (1)_x \otimes (a)_z && \frac{m}{2} - 1 + 2j && \mV^\infty_{\frac{m}{2} +2j - 1}\\
&& (1)_y \otimes (a-2)_z && \frac{m}{2} - 1 + 2j && \mV^\infty_{\frac{m}{2} +2j - 1}\\
\hline
\hline
a = 1 && (1)_x \otimes (1)_z && \frac{m}{2} && \mV^\infty_{\frac{m}{2}}\\
\hline
a = 2j + 1 > 1 && (1)_x \otimes (a)_z && \frac{m}{2} + 2j && \mV^\infty_{\frac{m}{2} +2j}\\
&& (1)_y \otimes (a-2)_z && \frac{m}{2} + 2j && \mV^\infty_{\frac{m}{2} +2j}\\
\hline
\end{array} \]
\end{corollary}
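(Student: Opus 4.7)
The statement is essentially a book-keeping restatement of the preceding corollary, packaged in terms of the $\mathfrak{sl}_d(2)$-weight data. The plan is to extract the $\mcE$-eigenvalue and the corresponding Verma module from each summand identified in the previous corollary, and then arrange the answer according to the parity of $a \in \mN$.

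First, I would verify the $\mathfrak{sl}_d(2)$-commutation structure needed to interpret the Verma modules. Computing directly, $[\mcE, R] = 2R$, $[\mcE, L] = -2L$, and $[R, L] = \mcE$, so in the dictionary with the Notation section at the beginning of the paper, one has $R \leftrightarrow X$, $L \leftrightarrow Y$, and $\mcE \leftrightarrow H$. Thus any polynomial $v$ with $Lv = 0$ and $\mcE v = \lambda v$ is a lowest weight vector in the sense of the Verma module definition, and the cyclic $\mathcal{U}(\mathfrak{sl}_d(2))$-module it generates (which, by $L$-killing and the commutators, is obtained by repeated $R$-action) is precisely $\mV^\infty_\lambda$.

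Next, I would compute the $\mcE$-eigenvalues on each summand listed in the previous corollary using $\mcE = \mE_y - \mE_x + \mE_z + \frac{m}{2}$. On $(1)_x \otimes (a)_z$ one reads off the eigenvalue $-1 + a + \frac{m}{2}$, and on $(1)_y \otimes (a-2)_z$ one reads off $1 + (a-2) + \frac{m}{2} = -1 + a + \frac{m}{2}$ as well. The key small check is that the projection operator $\Pi_L = 1 + \frac{1}{\mcE - 2}RL$ preserves the $\mcE$-eigenvalue, because $RL$ commutes with $\mcE$ (both $R$ and $L$ are $\mathfrak{sl}_d(2)$-generators, so $RL$ is a weight-$0$ element). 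Hence the projected vector carries the same eigenvalue as its pre-image. Substituting $a = 2j$ (even case, $j \geq 1$) gives eigenvalue $\frac{m}{2} - 1 + 2j$, substituting $a = 2j + 1$ (odd case, $j \geq 1$) gives $\frac{m}{2} + 2j$, and the degenerate values $a = 0$ and $a = 1$ yield $\frac{m}{2} - 1$ and $\frac{m}{2}$ respectively.

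Finally, I would combine these two observations: by Theorem \ref{branching} together with the corollary preceding it, the listed subspaces indeed exhaust $\ker_{1,\alpha}(L)$, so nothing else can contribute. For each summand the $\mcE$-eigenvalue computed above equals the lowest weight of the $\mathfrak{sl}_d(2)$-Verma module obtained by repeated $R$-action, giving the last column of the table. The only point requiring any care is the eigenvalue computation for the projected term $\Pi_L\bigl((1)_y\otimes(a-2)_z\bigr)$, which I would expect to be the main (and only minor) obstacle; this is handled uniformly by the observation that $\Pi_L$ is built from $\mcE$-weight-zero operators. Sorting the four cases by parity of $a$ and separating out $a = 0$ and $a = 1$ as the degenerate rows then produces the table exactly as stated.
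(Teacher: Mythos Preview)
Your proposal is correct and matches the paper's approach: the corollary is stated without proof there, being understood as a direct tabular reorganisation of the preceding corollary (the explicit description of $\ker_{1,\frac{m}{2}-1+a}(L)$ for each $a$) together with the observation that repeated $R$-action on a lowest weight vector of $\mcE$-eigenvalue $\alpha$ generates $\mV^\infty_\alpha$. Your explicit check that $\Pi_L$ is $\mcE$-weight-zero is a useful clarification that the paper leaves implicit.
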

\section{The branching rule for symplectic monogenics}
\noindent
In this section we come to the main goal of this paper: the branching rule for  
\begin{align*}
\mathbb{S}^{\infty}_1\bigg\downarrow^{\mathfrak{sp}(2m)}_{\mathfrak{so}(m)}.
\end{align*} 
In order to make this branching explicit, we can focus on the second column in the table above and decompose these tensor products so that $\so(m)$-irreducible building blocks appear. Once this is done, we still need to project on $\ker(D_s)$ and this will then lead to our final result. First of all, in order to decompose the tensor products $(1) \otimes (a)$, for all $a \in \mN$, we need the following. 
\begin{lemma}[{Klimyk}, \cite{Kl}] 
	Let $a,b\in\mathbb{N}$ such that $a\geq b$. Then, we have the following decomposition rule for the tensor product of irreducible $\mathfrak{so}(m)$-modules (with $m \geq 6$):
\begin{align}\label{Klymik}
	(a,0,\ldots,0) \otimes (b,0,\ldots,0) \cong \bigoplus_{i=0}^b  \bigoplus_{j=0}^{b-i}(a-i+j, b-i-j,0,\ldots,0)\ .
\end{align}
Recall that we will omit trailing zeroes in what follows. 
\end{lemma}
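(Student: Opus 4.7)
My plan is to derive Klimyk's formula within the paper's Howe-dual polynomial framework. First, realise the tensor product $(a) \otimes (b)$ as $\mcH_a(\mR^m,\mC) \otimes \mcH_b(\mR^m,\mC)$, the space of polynomials $P(\uu, \uv)$ on $\mR^m \oplus \mR^m$ that are of bi-degree $(a,b)$ and harmonic separately in $\uu$ and $\uv$. This space embeds in $\mcP_{a,b}(\mR^{2m}, \mC)$ as $\ker \Delta_u \cap \ker \Delta_v$, and the goal becomes to decompose it as an $\so(m)$-module.

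Next, invoke the $\big(\so(m), \mathfrak{sp}(4)\big)$ Howe duality on $\mcP(\mR^{2m}, \mC)$ (see \cite{LPC,HT}): in the stable range $m \geq 6$ one has the multiplicity-free decomposition
\[ \mcP(\mR^{2m}, \mC) \cong \bigoplus_{p \geq q \geq 0} \mcH_{p,q}(\mR^{2m}, \mC) \otimes N_{p,q}, \]
where each simplicial harmonic space $\mcH_{p,q}$ (Definition \ref{simplicial}) realises the irreducible $\so(m)$-module of highest weight $(p,q)$, and $N_{p,q}$ is its $\mathfrak{sp}(4)$-dual partner, generated from a lowest-weight vector by the $\so(m)$-invariants $|\uu|^2$, $|\uv|^2$, $\langle \uu,\uv\rangle$, $\langle \uu,\underline{\partial}_v\rangle$, $\langle \uv,\underline{\partial}_u\rangle$ (modulo the corresponding Fischer-dual operators). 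The multiplicity of $(p,q)$ in $\mcH_a(\uu) \otimes \mcH_b(\uv)$ then equals the dimension of the bi-weight $(a,b)$ subspace of $N_{p,q}$ annihilated by $\Delta_u$ and $\Delta_v$.

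The remaining step is a direct combinatorial count inside $N_{p,q}$. Since $\Delta_u$ and $\Delta_v$ must annihilate the result, every monomial involving $|\uu|^2$ or $|\uv|^2$ is eliminated, and only products of the form $\langle \uu,\uv\rangle^i\,\langle \uu,\underline{\partial}_v\rangle^j$ (suitably projected onto $\ker\Delta_u\cap\ker\Delta_v$) contribute. Matching bi-degree forces $p = a - i + j$ and $q = b - i - j$; the constraint $q \geq 0$ gives $0 \leq i \leq b$ and $0 \leq j \leq b - i$, while the dominance $p \geq q$ is automatic since $a \geq b$. Each admissible pair $(i,j)$ yields a one-dimensional weight subspace, contributing exactly one copy of the $\so(m)$-irreducible $(a - i + j, b - i - j)$. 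Summing over these pairs reproduces formula \eqref{Klymik}.

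The main obstacle is showing that these projected monomial vectors remain linearly independent inside $\ker\Delta_u \cap \ker\Delta_v$, so that no hidden multiplicities arise, and that each admissible $\mcH_{p,q}(\mR^{2m},\mC)$ is non-zero. Both are guaranteed by the stable-range hypothesis $m \geq 6$; outside this range, modification terms and cancellations may appear, cf.\ \cite{Lav}.
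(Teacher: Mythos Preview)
The paper does not give its own proof of this lemma; it is quoted directly from Klimyk \cite{Kl} as an external result, so there is no in-paper argument to compare against. Your Howe-duality approach is a legitimate and natural route within the harmonic-analytic framework the paper employs elsewhere (cf.\ \cite{BER}), and it would indeed yield the formula.

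There is, however, an internal inconsistency in your bookkeeping. The operator $\langle \uu,\underline{\partial}_v\rangle^j$ shifts the bi-degree by $(+j,-j)$, so together with $\langle\uu,\uv\rangle^i$ acting on a simplicial harmonic of bi-degree $(p,q)$ one lands in bi-degree $(p+i+j,\,q+i-j)$; equating this to $(a,b)$ gives $p=a-i-j$ and $q=b-i+j$, not the relations you state. To obtain $p=a-i+j$ and $q=b-i-j$ you must use $\langle\uv,\underline{\partial}_u\rangle^j$ instead, and this is in fact forced: the simplicial condition of Definition~\ref{simplicial} already gives $\langle\uu,\underline{\partial}_v\rangle H_{p,q}=0$, so only $\langle\uv,\underline{\partial}_u\rangle$ (equivalently the transvector $S_{vu}$ of Definition~\ref{Z-gen}) acts nontrivially. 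With this correction the degree count and the ranges $0\leq i\leq b$, $0\leq j\leq b-i$ are consistent.

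The substantive gap is the spanning and independence claim. You note it and defer to the stable range, but a complete argument needs either a PBW-type basis for the harmonic part of $N_{p,q}$ (showing that $\{C_{uv}^i S_{vu}^j\}$ exhaust the bi-weight $(a,b)$ vectors in $\ker\Delta_u\cap\ker\Delta_v$), or the two-step Fischer decomposition: first peel off $\langle\uu,\uv\rangle$-powers against $\langle\underline{\partial}_u,\underline{\partial}_v\rangle$, then decompose the resulting Howe harmonics under the skew $\mathfrak{sl}(2)$ generated by $\langle\uu,\underline{\partial}_v\rangle$ and $\langle\uv,\underline{\partial}_u\rangle$. Either is standard, but as written your proposal is a plan rather than a proof.
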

\noindent
In order to interpret this result in terms of harmonics, we recall Definition \ref{simplicial} for the special case $N=2$. It is important to point out that $\uz \in \R^m$ plays a special role again: the first eigenvalue in the notation for a highest weight will always refer to the degree of homogeneity in $\uz$.  
\begin{definition}
	For all $k\geq \ell$, we define the space of so-called \textit{simplicial harmonics} $\mathcal{H}_{k,\ell}$ by means of
		\begin{align}
		\mathcal{H}_{k,\ell}(\mathbb{R}^{2m},\mathbb{C}) = \mathcal{P}_{k,\ell} (\mathbb{R}^{2m},\mathbb{C}) \cap \ker(\Delta_z,\Delta_x,\langle \up_z,\up_x\rangle, \langle \uz,\up_x\rangle). 
		\end{align}
		It defines a model for the irreducible $\mathsf{SO}(m)$-module with highest weight $(k,\ell)$ for $k\geq \ell$ (dominant weight condition). 
\end{definition}
\noindent
In the language of harmonics, formula (\ref{Klymik}) states that the product of two harmonic polynomials can be decomposed in terms of simplicial harmonics. For $a \geq 2$, the tensor products $(1)_{x}\otimes (a)_{z}$ and $(1)_{y}\otimes (a-2)_{z}$ appearing in the second column of table \ref{table_ker} give rise to the following `Klymik triangles' (where trailing zeroes were omitted from highest weights):
\[
\begin{tikzcd}[column sep=1.5em]
(a+1) \arrow[dash]{rr}{} \arrow[swap,dash]{dr}{}& &(a-1) \arrow[dash]{dl}{}\\
& (a,1) & 
\end{tikzcd}
\oplus 
\begin{tikzcd}[column sep=1.5em]
(a-1) \arrow[dash]{rr}{} \arrow[swap,dash]{dr}{}& &(a-3) \arrow[dash]{dl}{}\\
& (a-2,1) & 
\end{tikzcd}
\]
This is a graphical representation, with for instance $(a) \otimes (1) \cong (a,1) \oplus (a+1) \oplus (a-1)$. These decompositions are not completely `explicit' yet, they only hold up to isomorphism. In order to turn them into equalities, we must introduce the appropriate embedding factors (an easy way to see why this is needed is the following: the degrees of homogeneity must match). A systematic way to construct these embedding operators goes back to the work on transvector algebras by Zhelobenko in \cite{Zhelobenko}. To defined this algebra, one starts from a Lie algebra $\mathfrak{g}$ with a subalgebra $\mathfrak{s}\subset\mathfrak{g}$ which is reductive in $\mathfrak{g}$. This means that we can write $\mathfrak{g}= \mathfrak{s}\oplus \mathfrak{t}$, for some subset $\mathfrak{t}\subset\mathfrak{g}$ which is closed under the $\mathfrak{s}$-action defined by the commutator. By fixing a Cartan subalgebra $\mathfrak{h}\subset\mathfrak{s}$, we obtain the triangular decomposition $\mathfrak{s} = \mathfrak{s}^-\oplus\mathfrak{h}\oplus \mathfrak{s}^+$, where the subalgebras $\mathfrak{s}^{\pm}$ contain the positive (resp. negative) roots. If we now consider the left ideal $J:=\mathcal{U}(\mathfrak{g})\mathfrak{s}^+$ generated by $\mathfrak{s}^+$ in the universal enveloping algebra $\mathcal{U}(\mathfrak{g})$, we can construct the \textit{normaliser} $\text{Norm}(J)=\{u\in\mathcal{U}(\mathfrak{g})\mid Ju\subset J\}$. This is a subalgebra $\text{Norm}(J)\subset \mathcal{U}(\mathfrak{g})$ and $J$ is a two-sided ideal of $\text{Norm}(J)$.
The quotient algebra	$\mathcal{S}(\mathfrak{g},\mathfrak{s})=\text{Norm}(J)/J$ is known as the \textit{Mickelsson algebra} in the literature. We will now consider an extension of this algebra by enlarging the universal enveloping algebra $\mathcal{U}(\mathfrak{g})$ in the following way. Denote the field of fractions of the universal enveloping algebra of the Cartan algebra by $\text{Frac}(\mathcal{U}(\mathfrak{h})):=R(\mathfrak{h})$. This allows us to the define 
$ \mathcal{U}'(\mathfrak{g})=\mathcal{U}(\mathfrak{g})\otimes_{\mathcal{U}(\mathfrak{h})}R(\mathfrak{h})$, 
which induces the following two-sided ideal $J':=\mathcal{U}'(\mathfrak{g})\mathfrak{s}^+\trianglelefteq \text{Norm}(J')$. This construction leads to the following:
\begin{definition}
The quotient algebra $\mathcal{Z}(\mathfrak{g},\mathfrak{s})=\text{Norm}(J')/J'$ is called the {Mickelsson-Zhelobenko} algebra, $Z$-algebra or \textit{transvector algebra}. Note that the construction of this algebra makes it possible to divide by the elements of the Cartan algebra. 
\end{definition}
\noindent
To arrive at an explicit construction of the embedding operators turning the isomorphism of the Klimyk decomposition into an equality, we start from the Lie algebra $\mathfrak{g}=\mathfrak{sp}(4)$, hereby following the approach from \cite{BER}. This Lie algebra can be realised as the algebra generated by rotationally invariant operators in two vector variables in $\mR^m$. Further on, this will be needed for both $(\uz,\ux)$ and $(\uz,\uy)$, but to illustrate the construction we will work with the latter. For both variables one can write down a copy of the algebra $\spl_h(2)$, which already accounts for 6 of the 10 generators in $\sym(4)$. Let us denote these Lie algebras by means of $\mathfrak{sl}^z_h(2)$ and $\mathfrak{sl}^y_h(2)$ respectively, where the meaning of the superscript should be clear. One then has that $\mathfrak{sp}(4) = \mathfrak{s} \oplus \mathfrak{t} = \left(	\mathfrak{sl}_{h}^z(2) \oplus \mathfrak{sl}_{h}^y (2) \right) \oplus \mathfrak{t} \cong \mathfrak{so}(4)\oplus \mathfrak{t}$ where $\mathfrak{t}$ is the subspace of $\mathfrak{sp}(4)$ defined by 
$
\mathfrak{t}:=\operatorname{span}\left(\langle \uy,\uz\rangle , \langle \up_y,\up_z \rangle ,\langle \uy,\up_z\rangle, \langle \uz,\up_y\rangle	\right)$.
It is then easily seen that $\mathfrak{t}$ indeed carries an action of $\mathfrak{so}(4)$ under the commutation bracket. Applying the general construction for transvector algebras to this specific case results in the following: 
\begin{definition}\label{Z-gen}
	The generators of the transvector algebra  $\mathcal{Z}(\sym(4),\so(4))$ are given by 
	\begin{align*}
	S_{xz} & = \langle \ux,\up_z \rangle - \frac{|\ux|^2 \langle \up_x,\up_z \rangle}{2\mE_x + m - 4}\\
	S_{zx} & = \langle \uz,\up_x \rangle - \frac{|\uz|^2 \langle \up_x,\up_z \rangle}{2\mE_z + m - 4}\\
	A_{xz} & =  \langle \up_x,\up_z \rangle\\
	C_{xz} & =  \langle \ux,\uz \rangle - \frac{|\ux|^2 \langle \uz,\up_x \rangle}{2\mE_x + m - 4} - \frac{|\uz|^2 \langle \ux,\up_z \rangle}{2\mE_z + m - 4} + \frac{|\ux|^2|\uz|^2 \langle \up_x,\up_z \rangle}{(2\mE_x + m - 4)(2\mE_z + m - 4)}\ ,
	\end{align*}
	whereby any fraction $A/B$ should be read as $B^{-1}A$. The operators $A$ (for `annihilation') and $C$ (for `creation') are symmetric in their subscripts, but for the $S$-operators (for `skew') the order matters. 
\end{definition} 
\noindent
These operators do not form a Lie algebra, as some of the commutation relations are quadratic. We refer to \cite{BER} for an overview of the explicit commutation rules (they will not be needed in the paper).  
We can now use the generators of the algebra $\mathcal{Z}(\sym(4),\so(4))$ to make the isomorphism from Klymik's formula explicit. This is based on the observation that the generators of $\mathcal{Z}(\sym(4),\so(4))$ are all $\mcE$-homogeneous: for instance $S_{xz}$ satisfies $[\mcE,S_{xz}] = -2S_{xz}$, and similarly for the other operators. The following theorem is a refinement of Theorem \ref{branching} which essentially tells us how to rewrite each tensor product in the second column of our Table \ref{table_ker}. 
\begin{theorem}\label{contributions}
	By fixing an eigenvalue $\alpha := \frac{m}{2} - 1 + a$ with $a \in \mN$, we obtain the following contributions to $\ker_{1,\alpha}(L)$ in terms of $\so(m)$-irreducible representations: 
	\begin{enumerate}[\normalfont(i)]
		\item From the polynomials which have degree $1$ in $\ux \in \mR^m$, we get: 
		\[ \mcH_{a,1}(\mR^{2m},\mC) \oplus S_{xz}\mcH_{a + 1}(\mR^m,\mC) \oplus C_{xz}\mcH_{a-1}(\mR^m,\mC).\]
		\item From the polynomials which have degree $1$ in $\uy \in \mR^m$, we get:  
		\[ \Pi_L\bigg(\mcH_{a-2,1}(\mR^{2m},\mC) \oplus S_{yz}\mcH_{a - 1}(\mR^m,\mC) \oplus C_{yz}\mcH_{a-3}(\mR^m,\mC)\bigg). \]
	\end{enumerate}
Note that whenever the highest weight (the subscript attached to the spaces of polynomials) is non-dominant, it can be ignored. 
\end{theorem}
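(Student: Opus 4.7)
The starting point is Theorem~\ref{branching}, which already identifies
\[ \ker_{1,\alpha}(L) = \bigg((1)_x \otimes (a)_z\bigg) \oplus \Pi_L\bigg((1)_y \otimes (a-2)_z\bigg) \]
for $\alpha = \frac{m}{2} - 1 + a$. The task is to refine each tensor product into a sum of $\so(m)$-irreducibles and exhibit explicit embeddings for each summand. The abstract decomposition is supplied by Klimyk's formula:
\[ (1) \otimes (a) \cong (a+1) \oplus (a-1) \oplus (a,1), \]
and analogously $(1) \otimes (a-2) \cong (a-1) \oplus (a-3) \oplus (a-2,1)$. The strategy is to realize each of these three summands as an explicit subspace, using simplicial harmonics for the highest-weight piece and the transvector algebra generators of $\mcZ(\sym(4),\so(4))$ for the remaining ``harmonic'' pieces.

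For the first tensor product the top component $(a,1)$ coincides by Definition~\ref{simplicial} with the space of simplicial harmonics $\mcH_{a,1}(\mR^{2m},\mC)$ in the variables $(\uz,\ux)$, which already sits naturally inside $\mcP_1(\ux) \otimes \mcH_a(\uz)$. The two remaining summands $(a+1)$ and $(a-1)$ are realized via the Zhelobenko generators: apply $S_{xz}$ to $\mcH_{a+1}(\mR^m_z)$ and $C_{xz}$ to $\mcH_{a-1}(\mR^m_z)$. These maps are $\so(m)$-intertwining by construction (they are built from rotational invariants) and $\mcE$-homogeneous, so the images are of the correct weight and of the correct irreducible type. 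To check that they land in $\mcP_1(\ux) \otimes \mcH_a(\uz)$, I would verify: for $S_{xz}$ that on a function $H_{a+1}(\uz)$ the $\up_x$-term vanishes, leaving $\langle \ux,\up_z\rangle H_{a+1}$, which is $\Delta_z$-harmonic since $\Delta_z$ commutes with $\langle \ux,\up_z\rangle$; and for $C_{xz}$ that using $[\Delta_z,|\uz|^2] = 4\mE_z + 2m$, the two surviving terms
\[ \langle \ux,\uz\rangle H_{a-1} - \frac{|\uz|^2 \langle \ux,\up_z\rangle H_{a-1}}{2a + m - 4} \]
exactly balance under $\Delta_z$. Non-triviality of the embeddings (in the stable range $m \geq 6$) is a general property of transvector algebra generators following \cite{BER}.

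The second piece is entirely parallel after swapping $\ux \leftrightarrow \uy$ and applying the extremal projector $\Pi_L$. Since $\Pi_L$ is built from $\so(m)$-invariants $L$ and $R$, it is $\so(m)$-intertwining and preserves isotypic types, so the three summands $\Pi_L\mcH_{a-2,1}$, $\Pi_L S_{yz}\mcH_{a-1}$ and $\Pi_L C_{yz}\mcH_{a-3}$ provide the stated decomposition. Degenerate cases are uniform: whenever a subscript attached to a harmonic space becomes negative or a simplicial-harmonic weight violates dominance, the corresponding space is simply zero and drops out. The main subtlety I expect is confirming that each Klimyk summand appears \emph{exactly} once, so that no hidden duplications arise between (say) $S_{xz}\mcH_{a+1}$ at one value of $a$ and $C_{xz}\mcH_{a-1}$ at another; this will follow from the $\mcE$-homogeneity of the transvector generators, which assigns each of them to a distinct degree-shift before the eigenvalue $\alpha$ is fixed.
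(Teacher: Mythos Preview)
Your proposal is correct and follows essentially the same route as the paper: the theorem is presented there as a direct refinement of Theorem~\ref{branching}, with the abstract decomposition supplied by Klimyk's formula and the explicit embeddings by the $\mcE$-homogeneous generators of $\mcZ(\sym(4),\so(4))$ from Definition~\ref{Z-gen}. In fact the paper does not give a separate proof block for this statement at all, so your verification that $S_{xz}\mcH_{a+1}$ and $C_{xz}\mcH_{a-1}$ land in $\mcP_1(\ux)\otimes\mcH_a(\uz)$ (via the commutator $[\Delta_z,|\uz|^2]$) and that $\Pi_L$ is $\so(m)$-intertwining supplies more detail than the original text.
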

\noindent
To arrive at the branching rule, we supplement the information from the theorem above with the Fischer decomposition for the space $\mcP(\mR^{2m},\mS^\infty_0)$ (see Theorem \ref{decomposition}). For $k=1$ we have that $\mcP_1(\mR^{2m},\mS^\infty_0) \cong \mS^\infty_1 \oplus D^\dagger_s \mS^\infty_0$. We already know how $\mS^\infty_0$ behaves under the branching (Theorem \ref{s0infty}), so in a sense we only need to `subtract' the Verma modules appearing in $\mS^\infty_0$ from the decomposition obtained for $\mcP_1(\mR^{2m},\mS_0)$. This `subtraction' is rather intricate though, in view of the countable infinities on the level of multiplicities. However, using the explicit realisation from above (in terms of transvector generators) one can look at the interplay between these generators and the symplectic Dirac operator $D_s$. Put differently: one can let $D_s$ act on the six summands from the theorem above, and all that remains is {\em not} part of $\mS^\infty_1$. \\
\noindent
Let us first of all give an {\em intuitive} argument, showing that one can localise all candidates in table \ref{table_ker} which should appear as part of $\mS^\infty_0$ (in a sense the complement of the spaces we are interested in). Note that these candidates are listed as irreducible representations for the action of SO$(m) \times \spl_h(2)$. 
\begin{enumerate}[(i)]
	\item  If we consider the \textit{even} degrees of homogeneity in $\uz \in \mR^m$ first, we need to localise $(2j) \otimes \mV^\infty_{\frac{m}{2} + 2j}$ in the table, whereby $(2j)$ refers to $\mcH_{2j}(\mR^m,\mC)$. 
\begin{enumerate}
\item From the third row (the case $a = 1$), we will get a contribution of the form 
\[ C_{xz}(0,\ldots,0) \subset (1)_x \otimes (1)_z. \] 
Under the repeated $R$-action, this should gives rise to a Verma module labelled by its lowest weight $\frac{m}{2}$. One can easily verify that the action of $C_{xz}$ on $\mC$ reduces to multiplication with $\langle \ux,\uz \rangle$, see definition \ref{Z-gen}, which is exactly what we would find for the action of $D_s^\dagger$ too.
\item From the fourth row (for odd $a > 1$), we get contributions of the form 
		\[ \left( \begin{array}{ccc}
		(a + 1) && \boxed{(a - 1)}\\
		&(a,1)&
		\end{array} \oplus  
		\begin{array}{ccc}
		\boxed{(a - 1)} && (a - 3)\\
		&(a-2,1)&
		\end{array}\right), \]
and these appear with a lowest weight Verma module labelled as $\mV^\infty_{\frac{m}{2} + (a-1)}$. This means that we expect the boxed summands to split into a part which belongs to $\ker(D_s)$ and a part which belongs to the orthogonal complement (contributing to the space $D_s^\dagger \mS^\infty_0$). This will be confirmed in the theorem below. 
		\item All other contributions coming from the lower half of the table are thus expected to contribute to $\mS^\infty_1$.  
	\end{enumerate}
	\item A similar thing can be done for \textit{odd} degrees of homogeneity in $\uz$ as well: this time we would like to use table \ref{table_ker} to localise tensor products of the form $(2j+1) \otimes \mV^\infty_{\frac{m}{2} + 2j + 1}$ for all $j \in \mN$. 
	\begin{enumerate}
		\item From the first row (for $a = 0$), we will get a contribution to $\mS^\infty_1$ (the corresponding Verma module does not appear in the list of Verma modules for $\mS^\infty_0$). 
		\item From the second row (for even $a > 0$), we get all the contributions of the form 
		\[ \left( \begin{array}{ccc}
		(a + 1) && \boxed{(a - 1)}\\
		&(a,1)&
		\end{array} \oplus  
		\begin{array}{ccc}
		\boxed{(a - 1)} && (a - 3)\\
		&(a-2,1)&
		\end{array}\right), \]
accompanied by the Verma module $\mV^\infty_{\frac{m}{2} + (a-1)}$ for $a = 2j$ an even number. We thus again expect the boxed summands to split into a part which belongs to $\ker(D_s)$ and a part which belongs to the orthogonal complement (contributing to the space $D_s^\dagger \mS^\infty_0$).
	\end{enumerate}
\end{enumerate}
All in all, we now know where to look for $D_s^\dagger \mS^\infty_0$, and this means that all other components should contribute to the summands appearing in the branching of $\mS^\infty_1$. Note that these `components' are of the form $(p,q) \otimes \mV_{\alpha}$, with $(p,q)$ a highest weight for $\so(m)$ and $\mV_\alpha$ a lowest weight Verma module generated by repeated $R$-action. The former are the desired branched components, the latter tells us how to organise the infinite multiplicities. Let us then have a closer look at the calculations which can support the abstract arguments. 

\begin{theorem}
The following highest $\mathfrak{so}(m)$-weights appear in the $1$-homogeneous kernel space of the symplectic Dirac operator: 
\[ (a,1), (a + 1), (a-2,1), (a-3), (a-1). \]
Each weight is defined for all values of $a \in \mN$ for which it exists (taking the dominant weight condition into account). Each of these representations appears with infinite multiplicity and can be interpreted as a Verma module (see the table after this theorem). 
\end{theorem}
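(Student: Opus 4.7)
The plan is to combine the $L$-Fischer decomposition of Theorem \ref{fischerB}, as refined through the transvector algebra in Theorem \ref{contributions}, with the symplectic Fischer decomposition at $k=1$ (Theorem \ref{decomp}), which reads $\mcP_1(\mR^{2m}, \mS^\infty_0) = \mS^\infty_1 \oplus D_s^\dagger \mS^\infty_0$. Fixing the $\mcE$-eigenvalue $\alpha = \tfrac{m}{2} - 1 + a$ (see Corollary \ref{table_ker}), Theorem \ref{contributions} displays $\ker_{1,\alpha}(L)$ as a sum of (at most) six irreducible $\so(m)$-summands of highest weights $(a,1)$, $(a+1)$, $(a-1)$, $(a-2,1)$, $(a-1)$ and $(a-3)$, with any non-dominant weights simply ignored and the degenerate cases for small $a$ covered by Theorem \ref{branching}. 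Since $R$ and $L$ both commute with $D_s$ and $D_s^\dagger$ by Lemma \ref{commuting}, applying $R^j$ for $j\in\mN$ packages each $D_s$-solution in $\ker_{1,\alpha}(L)$ into a lowest-weight Verma module $\mV^\infty_\alpha$ for $\spl_d(2)$. Hence it suffices to determine which of the six $\so(m)$-components in $\ker_{1,\alpha}(L)$ lie in $\ker(D_s)$.

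The decisive observation is that $D_s$ and $D_s^\dagger$ also commute with $\mcE$: a short commutator calculation gives $[\mcE, D_s] = [\mcE, D_s^\dagger] = 0$. Combined with $\so(m)$-invariance, this forces $D_s$ to map any irreducible $\so(m)$-component in $\ker_{1,\alpha}(L)$ into the summand of $\mS^\infty_0$ carrying the same highest weight and the same $\mcE$-eigenvalue. By Theorem \ref{s0infty} the module $\mS^\infty_0$ is a direct sum of single-row summands $(k)_z$ with $\mcE$-eigenvalue $k + \tfrac{m}{2}$; solving $k + \tfrac{m}{2} = \alpha$ forces $k = a - 1$, so the \emph{only} $\so(m)$-type of $\mcE$-eigenvalue $\alpha$ occurring in $\mS^\infty_0$ is $(a-1)$.

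Putting the pieces together, the four summands of highest weights $(a,1)$, $(a+1)$, $(a-2,1)$ and $(a-3)$ necessarily lie in $\ker(D_s)$, since their types either do not appear in $\mS^\infty_0$ at all (the two-row weights $(a,1)$ and $(a-2,1)$) or appear in $\mS^\infty_0$ only at a different $\mcE$-eigenvalue (the single-row weights $(a+1)$ and $(a-3)$). For the two copies of $(a-1)$ present in $\ker_{1,\alpha}(L)$, the equivariant embedding $D_s^\dagger : \mS^\infty_0 \hookrightarrow \mcP_1(\mR^{2m},\mS^\infty_0)$ (injective since $D_s^\dagger P = 0$ separated by $(\ux,\uy)$-degree immediately gives $P = 0$) shows that $D_s^\dagger\mS^\infty_0$ contains exactly one copy of $(a-1)$ at $\mcE$-eigenvalue $\alpha$. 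By Theorem \ref{decomp}, precisely one linear combination of the two $(a-1)$-summands lies in $D_s^\dagger\mS^\infty_0$, while the complementary combination contributes the surviving copy of $(a-1)$ to $\mS^\infty_1$. Repeated $R$-action on the five resulting $D_s$-solutions then builds the Verma modules recorded in the theorem.

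The only real subtlety in this plan is the bookkeeping for the $(a-1)$-cancellation, but this is a purely dimensional count per Verma generator and is settled by the argument above; if an explicit realisation of the $(a-1)$-summand in $\mS^\infty_1$ is wanted, one simply expands $D_s^\dagger H_{a-1}(\uz)$ in the transvector basis $\{C_{xz}\mcH_{a-1}, \Pi_L S_{yz}\mcH_{a-1}\}$ and takes the orthogonal complement with respect to the Fischer inner product. Also, one should keep in mind that for very small values of $a$ (the `primed' degenerate cases in Theorem \ref{branching}) some of the six summands do not exist, so the number of surviving $\so(m)$-types drops accordingly; this is harmless since each $a$ is treated independently by the $\mcE$-eigenvalue argument above.
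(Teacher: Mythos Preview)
Your overall strategy is sound and is, in fact, more conceptual than the paper's proof: the paper verifies $D_s$-annihilation on each of the six summands of Theorem \ref{contributions} by an explicit computation (e.g.\ $D_s\langle\ux,\up_z\rangle H_{a+1}(\uz)=-\Delta_z H_{a+1}(\uz)=0$, and $D_s\Pi_L C_{yz}H_{a-3}(\uz)$ reduces to $\Pi_L$ acting on $|\uz|^2 H_{a-3}(\uz)$, which vanishes), whereas you argue by $\so(m)$-type and $\mcE$-eigenvalue matching. That is a genuinely different and cleaner route, and the handling of the two $(a-1)$-copies via injectivity of $D_s^\dagger$ is also tidier than the paper's explicit rewriting of $C_{xz}H_{a-1}$ and $\Pi_L S_{yz}H_{a-1}$.

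There is, however, one real gap. Your claim that in $\mS^\infty_0$ the summand $(k)_z$ carries $\mcE$-eigenvalue $k+\tfrac{m}{2}$ is false as stated: by Theorem \ref{s0infty} the $(k)$-isotypic component of $\mS^\infty_0$ is $\mcH_k\otimes\mV^\infty_{k+m/2}$, and the Verma module realises \emph{all} eigenvalues $k+\tfrac{m}{2}+2p$ for $p\in\mN$ (via $|\uz|^{2p}\mcH_k$). In particular, the type $(a-3)$ \emph{does} occur in $\mS^\infty_0$ at $\mcE$-eigenvalue $\alpha=\tfrac{m}{2}-1+a$, so your matching argument collapses for that summand. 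The fix is immediate and uses an ingredient you already cited: since $[L,D_s]=0$ (Lemma \ref{commuting}), the image $D_s\big(\ker_{1,\alpha}(L)\big)$ lands in $\ker_{0,\alpha}(L)$, and on $\mS^\infty_0$ the operator $L$ reduces to $-\tfrac{1}{2}\Delta_z$, so $\ker_{0,\alpha}(L)=\mcH_{a-1}(\mR^m,\mC)$, which carries the single $\so(m)$-type $(a-1)$. With this refinement your type-matching argument goes through for $(a,1)$, $(a+1)$, $(a-2,1)$ and $(a-3)$, and the same observation applied to $D_s^\dagger$ justifies that $D_s^\dagger\mS^\infty_0\cap\ker_{1,\alpha}(L)=D_s^\dagger\mcH_{a-1}$ is exactly one copy of $(a-1)$, completing the count.
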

\begin{proof}
We will essentially go through the building blocks for $\ker_{1,\alpha}(L)$, see once again Table \ref{table_ker} for the list: 
\begin{enumerate}[\normalfont(i)]
\item Using the fact that $H_{a,1}(\uz;\ux)$ is independent of $\uy$ and that it is a simplicial harmonic, which means that $\langle\underline{\partial}_x,\underline{\partial}_z \rangle H_{a,1}(\uz;\ux)$ we immediately obtain that $D_s H_{a,1}(\uz;\ux) = 0$. 
\item One directly observes that $D_s \langle \ux,\up_z \rangle H_{a+1}(\uz) = -\Delta_z H_{a+1}(\uz) = 0$. 
\item To show that $D_s \Pi_L H_{a-2,1}(\uz;\uy) = 0$, we make use of the fact that the projection operator $\Pi_L$ commutes with $D_s$ (it is defined in terms of $R$ and $L$), and simply note that $D_s H_{a-2,1}(\uz;\uy) = 0$ by definition of the simplicial harmonics. 
\item We have that $D_s \Pi_L C_{yz}H_{a-3}(\uz) = \Pi_L D_s C_{yz} H_{a-3}(\uz)$. Plugging in the explicit definition for $C_{yz}$, for which we refer to definition \ref{Z-gen}, it is clear that
\[  D_sC_{uz}H_{a-3}(\uz) = D_s\left( \langle \uy,\uz \rangle - \frac{|\uz|^2 \langle \uy,\up_z \rangle}{2a + m - 4}\right)H_{a-3}(\uz) = c_{a,m}|\uz|^2 H_{a-3}(\uz),  \]
with $c_{a,m}$ a numerical (albeit irrelevant) constant. 	This is clearly not trivial, but we still have to let the operator $\Pi_L$ act, and on functions depending on $\uz \in \mR^m$ only this projection operator reduces to a projection on harmonics in $\uz$. Due to the presence of the factor $|\uz|^2$, we indeed get zero as desired. 
\item Finally, let us focus on the two copies of $(a-1)$ appearing in the `Klymik triangles'. The first component appears as $C_{xz}\mathcal{H}_{a-1}(\mR^{m},\mC)$, which can be simplified as 
\begin{align*}
\left( \langle \ux,\uz \rangle - \frac{|\uz|^2 \langle \ux,\up_z \rangle}{2\mE_z + m - 4}\right)H_{a - 1}(\uz)
	= \left( \langle \ux,\uz \rangle - \frac{|\uz|^2 \langle \ux,\up_z \rangle}{2a + m - 4}\right)H_{a - 1}(\uz)
\end{align*}
and the second as $\Pi_L S_{xz}\mathcal{H}_{a-1}(\mR^{m},\mC)$. Using the explicit form for the (truncated) projection operator, this can be rewritten as 
\begin{align*}
\left(\langle \uy,\up_z \rangle + \frac{R \langle \ux,\up_z \rangle}{a + \frac{m}{2} - 3}\right)H_{a - 1}(\uz)= \left(1 + \frac{R L}{a + \frac{m}{2} - 3}\right)\langle \uy,\up_z \rangle H_{a - 1}(\uz).
\end{align*} 
This can be slightly rewritten, in the sense that 
	\[ (2a + m - 4)C_{xz}H_{a - 1}(\uz) = \big( (2a + m - 4)\langle \ux,\uz \rangle - |\uz|^2 \langle \ux,\up_z \rangle \big)H_{a - 1}(\uz) \]
	and also 
	\[ (2a + m - 6)\Pi_L \langle \uy,\up_z \rangle H_{a - 1}(\uz) = \big( (2a + m - 6)\langle \uy,\up_z \rangle + |\uz|^2 \langle \ux,\up_z \rangle \big)H_{a - 1}(\uz). \]
	From this we can clearly see that there will be a contribution of the form $D_s^\dagger H_\ell(\uz)$, such that the complement contributes to $\ker(D_s)$.
\end{enumerate}
This exhausts all summands, and therefore finishes the proof. 
\end{proof}
\noindent
Putting everything together, we can now list all the contributions to the branching problem for $\mS^\infty_1$. This is given as a list of all the Verma modules (keeping track of the countable multiplicities) together with the list of highest $\so(m)$-weights (ignoring all trailing zeroes). We also added the possible values for $a \in \mN$ as a range condition. 
$$\begin{tabular}{|c|c|c|}
\hline 	 
{Verma module} & $\so(m)$-weight & range \\ 
\hline \hline
$\mathbb{V}^{\infty}_{\frac{m}{2}+a-2}$	& $(a)$ & $a \geq 1$  \\ 
\hline 
$\mathbb{V}^{\infty}_{\frac{m}{2}+a-1}$	&  $(a,1)$ & $a \geq 1$  \\ 
\hline 
$\mathbb{V}^{\infty}_{\frac{m}{2}+a}$  &  $(a)$ & $a \geq 1$ \\ 
\hline 
$\mathbb{V}^{\infty}_{\frac{m}{2}+a + 1}$  &  $(a,1)$ & $a \geq 1$ \\
\hline
$\mathbb{V}^{\infty}_{\frac{m}{2}+a + 2}$  &  $(a)$ & $a \geq 0$ \\
\hline
\end{tabular} 
$$
\begin{theorem}[Branching rule]
The branching of the irreducible $\sym(2m)$-representation ${\mathbb{S}}_1^{\infty}$ with respect to the subalgebra $\so(m)$ can be described in a multiplicity-free way as a direct sum of irreducible $\so(m) \times \spl_d(2)$-representations, given by: 
$$\begin{tabular}{|l|c|}
\hline 	 
$(a) \otimes \mathbb{V}^{\infty}_{\frac{m}{2}+a-2}$ & $a \geq 1$  \\ 
\hline 
$(a,1) \otimes \mathbb{V}^{\infty}_{\frac{m}{2}+a-1}$	&  $a \geq 1$  \\ 
\hline 
$(a) \otimes \mathbb{V}^{\infty}_{\frac{m}{2}+a}$  &  $a \geq 1$ \\ 
\hline 
$(a,1) \otimes \mathbb{V}^{\infty}_{\frac{m}{2}+a + 1}$  &  $a \geq 1$ \\
\hline
$(a) \otimes \mathbb{V}^{\infty}_{\frac{m}{2}+a + 2}$  &  $a \geq 0$ \\
\hline
\end{tabular} 
$$
\end{theorem}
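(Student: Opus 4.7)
The plan is to package the highest-weight components identified in the theorem immediately preceding this branching rule into lowest-weight Verma modules for $\spl_d(2)$. First I would invoke Lemma~\ref{commuting}(i), which guarantees that both $R$ and $L$ commute with $D_s$: this implies that $R$ preserves $\ker(D_s)\cap \ker_1(L)$, and that restricting the $L$-Fischer decomposition of Theorem~\ref{fischerB} to the kernel of $D_s$ yields
\[
\mS^\infty_1 = \bigoplus_{j=0}^{\infty} R^j\bigl( \ker(D_s)\cap \ker_1(L) \bigr).
\]
Any vector $v$ inside $\ker(D_s)\cap \ker_1(L)$ satisfying $\mathcal{E} v = \alpha v$ is a lowest weight vector for $\spl_d(2)$, and since $\alpha \geq \frac{m}{2}-1 \geq 2$ under the stable-range hypothesis $m\geq 6$, the $\spl_d(2)$-module generated by $v$ under repeated $R$-action is irreducible and isomorphic to $\mathbb{V}_{\alpha}^{\infty}$. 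Consequently each of the five highest-weight components $(a,1), (a+1), (a-2,1), (a-3), (a-1)$ from the previous theorem contributes an irreducible $\so(m)\times\spl_d(2)$-summand of the form $\nu \otimes \mathbb{V}_{\alpha}^{\infty}$, with $\alpha$ equal to the $\mathcal{E}$-eigenvalue of its generating vector.

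Next I would carry out the bookkeeping for the five $\mathcal{E}$-eigenvalues. Using that the generators of $\mathcal{Z}(\sym(4),\so(4))$ are $\mathcal{E}$-homogeneous (for instance $S_{xz}$ shifts $\mathcal{E}$ by $-2$ and $C_{xz}$ by $0$, with analogous statements for the $(y,z)$-operators) and that $\mathcal{H}_c(\mathbb{R}^m,\mathbb{C})$ sits at eigenvalue $\frac{m}{2}+c$ while $\mathcal{H}_{c,1}(\underline{z};\underline{x})$ sits at $\frac{m}{2}+c-1$ and $\mathcal{H}_{c,1}(\underline{z};\underline{y})$ at $\frac{m}{2}+c+1$, one verifies that all five explicit representatives produced by Theorem~\ref{contributions} live in the same eigenspace $\alpha = \frac{m}{2}-1+b$ (with $b$ the parameter of Corollary~\ref{table_ker}). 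Re-indexing each of the five rows by the first entry of its $\so(m)$-highest weight (so $a=b+1$ for the $(b+1)$-component, $a=b$ for $(b,1)$, and so on) then converts $\alpha = \frac{m}{2}-1+b$ into the five Verma labels $\frac{m}{2}+a-2,\, \frac{m}{2}+a-1,\, \frac{m}{2}+a,\, \frac{m}{2}+a+1,\, \frac{m}{2}+a+2$ appearing in the table stated just before the theorem.

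The subtle step is justifying the five dominance ranges, in particular the condition $a\geq 1$ (rather than $a\geq 0$) in the third row. The weight $(b-1)$ appears with multiplicity two inside $\ker_1(L)$ only when $b\geq 2$, and the proof of the previous theorem shows that it is precisely this pair that splits into one copy lying in $\ker(D_s)$ and another proportional to $D_s^{\dagger}$ applied to $\mathcal{H}_{b-1}(\underline{z})$; in the borderline case $b=1$, the sole copy of $(0)$ reduces to $C_{xz}\cdot 1 = \langle \underline{x},\underline{z}\rangle = D_s^{\dagger}\cdot 1$ and so lies entirely in $D_s^{\dagger}\mS^\infty_0$, contributing nothing to $\mS^\infty_1$. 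Together with the obvious dominance constraints on $(b,1)$, $(b-2,1)$ and $(b-3)$, this pins down the five ranges. Finally, multiplicity-freeness under the joint $\so(m)\times\spl_d(2)$-action is automatic: within a fixed $\mathcal{E}$-eigenspace the five $\so(m)$-types are pairwise distinct, while between different $\mathcal{E}$-eigenspaces the Verma modules have distinct lowest weights, so no (weight, Verma) pair is ever repeated.
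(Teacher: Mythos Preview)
Your proposal is correct and follows essentially the same route as the paper: the theorem is obtained by repackaging the $\so(m)$-highest weights from the preceding theorem (and its table) as lowest-weight $\spl_d(2)$-Verma modules via the $L$-Fischer decomposition restricted to $\ker(D_s)$. The paper does not give a separate formal proof here; your write-up is in fact more explicit than the paper on the re-indexing from the parameter $b$ to the first highest-weight entry $a$, on the borderline case $b=1$ explaining why the third row starts at $a\geq 1$, and on the multiplicity-freeness argument.
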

\noindent
From this, one can extract a list of Verma modules for $\spl(2)$ which appear when branching the oscillator representation. This may seem odd, as the branching should give rise to a list of $\so(m)$-weights, but these are then `hidden' in the multiplicity with which this Verma module appears.

\begin{enumerate}[\normalfont (i)]
\item $\mathbb{V}^{\infty}_{\frac{m}{2}-1}$ appears with multiplicity $\dim(\mathcal{H}_1)$;
\item $\mathbb{V}^{\infty}_{\frac{m}{2}}$ appears with multiplicity $\dim(\mathcal{H}_{1,1})+\dim(\mathcal{H}_{2})$;
\item  $\mathbb{V}^{\infty}_{\frac{m}{2}+1}$ appears with multiplicity $\dim(\mathcal{H}_3)+ \dim(\mathcal{H}_{2,1})+\dim(\mathcal{H}_1)$;
\item $\mathbb{V}^{\infty}_{\frac{m}{2}+1+a}$ for $a > 1$ appears with multiplicity given by
$$\dim(\mathcal{H}_{a+3})+\dim(\mathcal{H}_{a+2,1})+\dim(\mathcal{H}_{a+1})+\dim(\mathcal{H}_{a,1})+\dim(\mathcal{H}_{a-1})\ .$$
\end{enumerate} 
Note that it is {\em not} sufficient to work with (iv) only, hereby adding that one should solely take the dominant weights into account, as this would give the wrong conclusion in (ii). \\
\\
\noindent
This list can be obtained in a different way, using another approach which emphasises the role played by the dual symmetry algebra $\spl(2)$. The advantage of this approach is that it generalises to arbitrary $k$-values (with $k$ the degree of homogeneity of a symplectic spinor), the disadvantage is that one loses track of the explicit embedding factors which thus somehow obscures the conclusion. To illustrate what we mean by this, let us once again start from the harmonic Fischer decomposition:
\[ \mS^\infty_0 \cong \mathcal{P}(\R^{m},\C)  = \bigoplus_{a=0}^{\infty}\mathcal{H}_a(\R^m,\C)\otimes \mathbb{V}_{a+\frac{m}{2}}^{\infty}\ . \]
Similarly, as a representation for $\spl_s(2)$ one can say that $\mathcal{P}_1(\mathbb{R}^{2m},\mathbb{C}) \cong \mcH_1(\mR^m,\mC) \otimes \mV_1$, whereby $\mcH_1$ is added to keep track of the multiplicity with which $\mV_1$ appears. Identifying this space $\mcH_1$ with the span of the variables $x_i$ (for $1 \leq i \leq m$), the space $\mV_1$ can then be seen as span$_\mathbb{C}\{x_i,y_i\}$ for each of these variables. Putting both tensor products together, we thus have the following decomposition into $\so(m) \times \spl(2)$-representations:
\begin{align*}
\mathcal{P}_1(\mathbb{R}^{2m},\mathbb{C})\otimes \mathcal{P}(\R^m,\C) &\cong \bigoplus_{a \in \mN}\left(\mathcal{H}_1\otimes \mathbb{V}_1\right) \otimes \left(\mathcal{H}_a \otimes \mathbb{V}^{\infty}_{\frac{m}{2}+a}	\right) \\
&\cong \bigoplus_{a \in \mN}\left( \mathcal{H}_1 \otimes \mathcal{H}_a	\right) \otimes \left( \mathbb{V}_1 \otimes \mathbb{V}^{\infty}_{\frac{m}{2}+a} \right)\ ,
\end{align*}
where we have grouped the representations for $\so(m)$ and $\spl(2)$ respectively. Decomposing the tensor products for $\spl(2)$ gives: 
\begin{align*}
\mathcal{P}_1(\mathbb{R}^{2m},\mathbb{C})\otimes \mathbb{S}_0^{\infty} 
&\cong \bigoplus_{a \in \mN} \bigg((1) \otimes (a)\bigg)\otimes \left( \mathbb{V}^{\infty}_{\frac{m}{2}+a+1}\oplus \mathbb{V}^{\infty}_{\frac{m}{2}+a-1}	\right)\ .
\end{align*}
Also the tensor products for $\so(m)$ can be calculated (using Klymik's rule), but then one needs to take into account that only dominant weights appear (for $a \in \{0,1\}$ certain $\so(m)$-weights should thus be omitted). Although this is in a sense less transparent, there is no information regarding embedding factors, it does mean that one can again make a list of the $\spl(2)$-modules appearing inside the space $\mcP(\mR^{2m},\mS^\infty_0)$ together with their multiplicities (once again expressed in terms of dimensions of $\so(m)$-representations): 
\begin{lemma}\label{S1} 
Under the action of the algebra $\so(m) \times \spl(2)$ on the space $\mathcal{P}_1(\mathbb{R}^{2m},\mathbb{C})\otimes \mathbb{S}_0^{\infty}$, one has that the following irreducible $\spl(2)$-representations will appear: 
	\begin{enumerate}[\normalfont(i)]
		\item $\mathbb{V}^{\infty}_{\frac{m}{2}-1}$ with multiplicity $m = \dim(\mcH_1)$;
		\item $\mathbb{V}^{\infty}_{\frac{m}{2}}$ with multiplicity $m^2 = \dim(\mcH_0 \oplus \mcH_{1,1} \oplus \mcH_2)$;
		\item	$\mathbb{V}^{\infty}_{\frac{m}{2}+a}$ for $a \geq 1$ with multiplicity $m\left(\dim(\mathcal{H}_{a-1}) + \dim(\mathcal{H}_{a+1})\right)$.
	\end{enumerate}
\end{lemma}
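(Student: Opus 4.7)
The plan is to compute the $\spl(2)$-module structure of $\mcP_1(\mR^{2m},\mC)\otimes\mS^\infty_0$ by combining the harmonic Fischer decomposition of the spinor factor with the natural $\spl_s(2)$-structure on the polynomial factor, and then reading off multiplicities by applying the Verma-tensor rule \eqref{Verma_tensor}. Concretely, the spinor factor comes pre-packaged by Theorem \ref{s0infty} as $\mS^\infty_0\cong\bigoplus_{a\ge 0}\mcH_a(\mR^m,\mC)\otimes\mV^\infty_{a+m/2}$, while the polynomial factor carries a transparent $\so(m)\times\spl_s(2)$-structure: indexing the vectors $(x_i,y_i)$ by $1\le i\le m$ yields an $\so(m)$-isotypic copy of $\mcH_1$, and for each fixed $i$ the pair $(x_i,y_i)$ spans the fundamental $\spl_s(2)$-module $\mV_1$, so that $\mcP_1(\mR^{2m},\mC)\cong\mcH_1\otimes\mV_1$.

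Tensoring and regrouping the $\so(m)$ and $\spl(2)$ tensor slots then gives
\[ \mcP_1(\mR^{2m},\mC)\otimes\mS^\infty_0 \;\cong\; \bigoplus_{a=0}^{\infty}\bigl(\mcH_1\otimes\mcH_a\bigr)\otimes\bigl(\mV_1\otimes\mV^\infty_{a+m/2}\bigr). \]
Applying \eqref{Verma_tensor} to the right-hand factor decomposes $\mV_1\otimes\mV^\infty_{a+m/2}$ as $\mV^\infty_{a+m/2-1}\oplus\mV^\infty_{a+m/2+1}$, so each Verma module $\mV^\infty_{m/2+b}$ picks up contributions from the indices $a=b-1$ and $a=b+1$ (with negative subscripts discarded). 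Summing these contributions, the total multiplicity of $\mV^\infty_{m/2+b}$ equals $\dim(\mcH_1\otimes\mcH_{b-1})+\dim(\mcH_1\otimes\mcH_{b+1})=m\bigl(\dim\mcH_{b-1}+\dim\mcH_{b+1}\bigr)$.

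Finally one checks the three boundary cases. For $b=-1$ only $a=0$ contributes, giving multiplicity $\dim\mcH_1=m$, which is (i). For $b=0$ only $a=1$ contributes (the would-be $a=-1$ term is empty), giving $\dim(\mcH_1\otimes\mcH_1)=m^2$; recognising this tensor product via Klimyk as $\mcH_0\oplus\mcH_{1,1}\oplus\mcH_2$ yields the form stated in (ii), and a quick dimension count $1+\tfrac{m(m-1)}{2}+\tfrac{(m-1)(m+2)}{2}=m^2$ verifies consistency. For $b\ge 1$ both $a=b\pm 1$ genuinely contribute and the general formula in (iii) follows. The only subtle point is bookkeeping of the boundary values of $a$; since the Verma-tensor rule \eqref{Verma_tensor} is already established and Klimyk's rule is invoked only to re-express the multiplicity space as a sum of irreducibles, there is no real analytic obstacle — the argument is entirely a matter of assembling the two Fischer-type decompositions and matching indices.
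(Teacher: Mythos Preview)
Your argument is correct and follows essentially the same route as the paper: you identify $\mcP_1(\mR^{2m},\mC)\cong\mcH_1\otimes\mV_1$ and $\mS^\infty_0\cong\bigoplus_a\mcH_a\otimes\mV^\infty_{a+m/2}$, regroup the tensor factors, apply the Verma-tensor rule \eqref{Verma_tensor}, and read off which values of $a$ contribute to each $\mV^\infty_{m/2+b}$. Your write-up is in fact more explicit than the paper's (which just points to the decomposition derived in the paragraph preceding the lemma and says ``for (i) only $\mcH_1$ will contribute, for (ii) we get $\mcH_1\otimes\mcH_1$, and for the generic case there is a contribution from $(1)\otimes(a\pm1)$''), and your dimension check for (ii) via Klimyk is a nice sanity verification that the paper omits.
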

\begin{proof}
For this purpose, it suffices to look at the decomposition obtained above. For (i), only $\mcH_1$ will contribute, whereas for (ii) we get a multiplicity of the form $\mcH_1 \otimes \mcH_1$. For the generic case with $a \geq 1$, there is a contribution from $(1) \otimes (a\pm 1)$, leading to the desired dimensions. 
\end{proof}
\begin{theorem}
Under the action of the algebra $\so(m) \times \spl(2)$ on the space ${\mathbb{S}}_1^{\infty}$, the following irreducible $\spl(2)$-representations will appear:
	\begin{enumerate}[\normalfont(i)]
		\item $\mathbb{V}^{\infty}_{\frac{m}{2}-1}$ with multiplicity $m$;
		\item $\mathbb{V}^{\infty}_{\frac{m}{2}}$ with multiplicity $m^2-1$;
		\item	$\mathbb{V}^{\infty}_{\frac{m}{2}+a}$ with multiplicity $m\left(\dim(\mathcal{H}_{a-1}+\dim(\mathcal{H}_{a+1}))\right)-\dim(\mathcal{H}_a)$.
	\end{enumerate}
	
\end{theorem}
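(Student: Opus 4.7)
The plan is to exploit the symplectic Fischer decomposition of Theorem \ref{decomp} in its simplest non-trivial instance, namely
\[ \mcP_1(\mR^{2m},\mS^\infty_0) \;=\; \mS^\infty_1 \,\oplus\, D^\dagger_s\,\mS^\infty_0, \]
and to turn the computation of $\spl_d(2)$-multiplicities in $\mS^\infty_1$ into the arithmetic of subtracting the multiplicities in $D^\dagger_s\mS^\infty_0$ from those in $\mcP_1(\mR^{2m},\mS^\infty_0)$, both of which are already known.

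The first step is to establish that $D^\dagger_s\mS^\infty_0 \cong \mS^\infty_0$ as $\so(m)\times\spl_d(2)$-modules. By Lemma \ref{commuting}(i), $D^\dagger_s$ commutes with the generators $R$ and $L$ of $\spl_d(2)$ and hence with $\mcE$, so it preserves every $\spl_d(2)$-Verma component and sends a lowest-weight vector (for $\mcE$) to a lowest-weight vector with the same eigenvalue. Since $D^\dagger_s$ is built from the $\sym(2m)$-generators of Lemma \ref{sprealisation} and is $\so(m)$-scalar (each term is a full inner product contracting the three vector variables), it is $\so(m)$-equivariant as well. Finally, injectivity of $D^\dagger_s$ on $\mS^\infty_0$ is forced by the triangular picture following Theorem \ref{decomp}: if it had a kernel, the summand $D^\dagger_s\mS^\infty_0$ would be strictly smaller than $\mS^\infty_0$ and the displayed Fischer decomposition would fail. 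Consequently, Theorem \ref{s0infty} transfers verbatim and tells us that $\mV^\infty_{\frac{m}{2}+a}$ occurs in $D^\dagger_s\mS^\infty_0$ with multiplicity $\dim(\mcH_a)$ for every $a\in\mN$, and no other Verma module occurs.

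It remains to subtract these multiplicities from those listed in Lemma \ref{S1}. Three cases appear:
\begin{enumerate}[\normalfont(i)]
\item The Verma module $\mV^\infty_{\frac{m}{2}-1}$ has lowest weight strictly smaller than the smallest one appearing in $\mS^\infty_0$, so its multiplicity in $D^\dagger_s\mS^\infty_0$ is zero and the full count $m$ of Lemma \ref{S1} survives in $\mS^\infty_1$.
\item For $\mV^\infty_{\frac{m}{2}}$, corresponding to $a=0$, we subtract $\dim(\mcH_0)=1$ from the total $m^2$ of Lemma \ref{S1}, giving $m^2-1$.
\item For $\mV^\infty_{\frac{m}{2}+a}$ with $a\geq 1$, we subtract $\dim(\mcH_a)$ from $m\bigl(\dim(\mcH_{a-1})+\dim(\mcH_{a+1})\bigr)$.
\end{enumerate}
The main point that needs spelling out carefully is that subtraction of multiplicities is legitimate: the Fischer splitting is an identity of $\sym(2m)$-modules and therefore, restricting, of $\so(m)\times\spl_d(2)$-modules, so isotypical components add across the direct sum and the Verma multiplicities in $\mS^\infty_1$ are literally the differences. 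Once this equivariance is in place, the three cases above reproduce the stated multiplicities. The main obstacle is really conceptual rather than computational, namely making sure that $D^\dagger_s$ is recognised as an $\so(m)\times\spl_d(2)$-intertwiner with trivial kernel; after that the result is immediate from Theorem \ref{s0infty} and Lemma \ref{S1}.
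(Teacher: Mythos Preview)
Your proof is correct and follows essentially the same strategy as the paper: invoke the symplectic Fischer decomposition $\mcP_1(\mR^{2m},\mS^\infty_0) = \mS^\infty_1 \oplus D^\dagger_s\mS^\infty_0$, use the known $\so(m)\times\spl(2)$-content of $\mS^\infty_0$ from Theorem \ref{s0infty} together with Lemma \ref{S1}, and subtract multiplicities. You are in fact more explicit than the paper in justifying why $D^\dagger_s\mS^\infty_0 \cong \mS^\infty_0$ as an $\so(m)\times\spl_d(2)$-module (the paper simply takes this for granted), though your injectivity argument would be cleaner if phrased via the $\spl_c(2)$-Verma structure rather than by contradiction with the Fischer triangle.
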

\begin{proof}
The symplectic Fischer decomposition says that $\mathcal{P}_1(\mR^{2m},\mC) \otimes\mathbb{S}_0^{\infty} = D_s^{\dagger}\mathbb{S}_0^{\infty} \oplus \mathbb{S}_1^{\infty}$, where the second summand on the right-hand side is the module we are interested in. This means that we need to `subtract' the contributions coming from $\mathbb{S}_0^{\infty}$ to obtain information about $\mS^\infty_1$. Due to the fact that under the action of $\so(m) \times \spl(2)$ we know that 
\[ \mathbb{S}_0^{\infty} = \mathcal{P}(\R^m,\mathbb{C}) \cong \bigoplus_{a=0}^{\infty} \mathcal{H}_a \otimes \mathbb{V}^{\infty}_{\frac{m}{2}+a}\ . \]
This means that subtracting $\dim(\mathcal{H}_a)$ times the Verma module $\mathbb{V}^{\infty}_{\frac{m}{2}+a}$ from the full space leaves us with the $\spl(2)$-Verma modules (together with their multiplicities) which should appear in $\mS^\infty_1$. Together with the information coming from lemma \ref{S1} above, this leads to the desired result.  
\end{proof}
\noindent
We can now see that the same Verma modules appear in both approaches (purely looking at their labels, i.e. the lowest weights). That also the multiplicities agree is not immediately clear, and should be verified explicitly (at least for the generic case, for small values of $a$ it is immediate). This is the content of the following proposition.
\begin{prop}
The following dimensional equality holds (for $a \geq 2$):
	\begin{align*}
	& m\left(\dim(\mathcal{H}_{a+1})+\dim(\mathcal{H}_{a-1})\right) -\dim(\mathcal{H}_a)\\
	&= \dim(\mathcal{H}_{a+1,1}) +\dim(\mathcal{H}_{a-1,1})+\dim(\mathcal{H}_{a+2}) + \dim(\mathcal{H}_{a})+ \dim(\mathcal{H}_{a-2}) 
	\end{align*}
\end{prop}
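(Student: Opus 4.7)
The plan is to reduce the identity to two applications of Klimyk's rule (formula (4.2) in the paper). Taking the left tensor factor to be $(1)$, which has dimension $m = \dim(\mathcal{H}_1)$, Klimyk's formula specializes to
\[ (1) \otimes (k) \;\cong\; (k+1) \;\oplus\; (k-1) \;\oplus\; (k,1), \]
where any non-dominant summand is discarded (for $k=0$ only $(1)$ survives, and for $k=1$ the summand $(0)$ survives but $(k-1)=(0)$ is understood appropriately). Passing to dimensions gives the identity
\[ m\,\dim(\mathcal{H}_k) \;=\; \dim(\mathcal{H}_{k+1}) \;+\; \dim(\mathcal{H}_{k-1}) \;+\; \dim(\mathcal{H}_{k,1}), \]
valid whenever the terms on the right are dominant (so certainly for $k \geq 2$, and with the obvious truncations for smaller $k$).

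First, I would apply this identity with $k = a+1$ to obtain
\[ m\,\dim(\mathcal{H}_{a+1}) \;=\; \dim(\mathcal{H}_{a+2}) + \dim(\mathcal{H}_{a}) + \dim(\mathcal{H}_{a+1,1}), \]
and then with $k = a-1$ to obtain
\[ m\,\dim(\mathcal{H}_{a-1}) \;=\; \dim(\mathcal{H}_{a}) + \dim(\mathcal{H}_{a-2}) + \dim(\mathcal{H}_{a-1,1}). \]
Both applications are legitimate under the assumption $a \geq 2$ (so that $a-2 \geq 0$ and all appearing weights are dominant, with $m \geq 6$ ensuring we remain in the stable range for Klimyk's rule).

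Summing the two displayed equalities yields
\[ m\bigl(\dim(\mathcal{H}_{a+1}) + \dim(\mathcal{H}_{a-1})\bigr) = \dim(\mathcal{H}_{a+2}) + 2\dim(\mathcal{H}_{a}) + \dim(\mathcal{H}_{a-2}) + \dim(\mathcal{H}_{a+1,1}) + \dim(\mathcal{H}_{a-1,1}). \]
Subtracting $\dim(\mathcal{H}_{a})$ from both sides leaves exactly one copy of $\dim(\mathcal{H}_{a})$ on the right, and rearranging produces the claimed equality. There is essentially no obstacle here: the only minor bookkeeping is to check that for $a = 2$ the weight $(a-2) = (0)$ and the space $\mathcal{H}_{a-1,1} = \mathcal{H}_{1,1}$ are both genuinely present (they are), so the generic argument applies uniformly for all $a \geq 2$.
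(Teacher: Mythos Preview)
Your proof is correct and follows essentially the same approach as the paper: both arguments rest on the dimensional consequence of Klimyk's rule $(1)\otimes(k)\cong (k+1)\oplus(k-1)\oplus(k,1)$ applied at $k=a+1$ and $k=a-1$, with the paper simply rearranging these identities to express $\dim(\mathcal{H}_{a\pm 1,1})$ before substituting into the right-hand side, while you sum the two instances directly and subtract $\dim(\mathcal{H}_a)$.
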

\begin{proof}
Using the information coming from the Klymik triangle, we have that 
\begin{align*}
\dim(\mcH_{a+1,1}) &= \dim(\mcH_{a+1})\dim(\mcH_1) - \dim(\mcH_{a+2}) - \dim(\mcH_a)\\
\dim(\mcH_{a-1,1}) &= \dim(\mcH_{a-1})\dim(\mcH_1) - \dim(\mcH_{a}) - \dim(\mcH_{a-2})\ ,
\end{align*}
which means that the right-hand side reduces to 
\[ \dim(\mcH_1)\big(\dim(\mcH_{a+1}) + \dim(\mcH_{a-1})\big) - \dim(\mcH_a)\ . \]
This is indeed equal to the left-hand side. 
\end{proof}
\noindent

\section{Conclusion and further research}
In {this paper} we solved the branching problem by introducing the algebra $\mathfrak{sl}_d(2)$ which commutes with $\mathfrak{so}(m)\subset\mathfrak{sp}(2m)$. By decomposing $\ker_1(L)$ into $\mathcal{E}$-eigenspaces we were able to list the components in the branching by introducing suitable embedding operators by means the transvector algebra $\mathcal{Z}(\mathfrak{sp}(4),\so(4))$, which resulted in a branching rule which can be expressed as {\em an equality}. \\
\noindent
At the same time, we also illustrated how this branching rule could be obtained as {\em an isomorphism}, hereby ignoring the embedding factors. In an upcoming paper we will use the latter approach to consider the branching problem for $\mS^\infty_k$ with $k \leq 2$, hereby using an approach which leads to an isomorphism. However, to conclude this paper we will briefly illustrate how this works: 
\begin{enumerate}
\item[(i)] First, one needs to know how the space $\mcP_k(\mR^{2m},\mC)$ of $k$-homogeneous polynomials in $(\ux,\uy)$ behaves as a representation space for the Lie algebra $\spl_s(2)$ in terms of the operators $\langle \ux,\up_y \rangle$ and $\langle \uy,\up_x \rangle$. To do so, we can rely on the fact that there is a Howe dual pair $\sym(4) \times \so(m)$ acting on the full space of polynomials in a matrix variable $(\ux,\uy) \in \mR^{2m}$ (see {Theorem 2.1 in \cite{BER} for the full statement and the references therein for a formal proof}), which allows us to decompose these polynomials into a sum of products of invariants $I(\ux,\uy)$ in the (commutative) algebra $ \mbox{Alg}(|\ux|^2,\langle \ux,\uy \rangle,|\uy|^2)$ times so-called Howe harmonics $G(\ux,\uy) \in \ker(\Delta_x,\Delta_y,\langle \up_x,\up_y \rangle)$. For both pieces, the behaviour under $\spl_s(2)$ can be characterised (these Howe harmonics can be understood as specific combinations of simplicial harmonics), which thus means that one can use the Clebsch-Gordon rules to understand how the space $\mcP_k(\mR^{2m},\mC)$ decomposes into $\spl_s(2)$-spaces.
\item[(ii)] Next, we know that these irreducible representations for $\spl_s(2)$ will appear with multiplicities, but these can be tracked using the behaviour under the action of $\so(m)$. From the invariants, there is no contribution, whereas from the spaces of simplicial harmonics there is a contribution of the form $(k,\ell,0,\ldots,0)$. 
\item[(iii)] When making everything $\mS^\infty_0$-valued, this reduces the problem to understanding tensor products of the form $\mV_p \otimes \mV^\infty_{a + \frac{m}{2}}$ for $\spl(2)$, and of the form $(k,\ell) \otimes (a)$ for $\so(m)$. The latter is slightly more complicated, and requires a generalisation of the Klymik rule which was used in this paper. 
\item[(iv)] Finally, once $\mS^\infty_{j}$ is understood for all $0 \leq j < k$, an inductive argument will allow us to list the spaces which contribute to $\mS^\infty_k$. This will then be a list of the form $\mcH_{p,q,r}(\mR^{3m},\mC) \otimes \mV^\infty_\lambda$, with $\lambda$ a suitable lowest weight for a Verma module for $\spl(2)$. 
\end{enumerate}


\nocite{*}
\bibliographystyle{apa}

\end{document}